\documentclass[12pt, reqno]{amsart}
\usepackage{amscd,amsmath,amsthm,amssymb,graphics}
\usepackage{amsfonts,amssymb,amscd,amsmath,enumitem,verbatim}
\usepackage[a4paper,top=3cm,left=3cm,right=3cm]{geometry}
\theoremstyle{plain}
\usepackage{orcidlink}
\usepackage{color}
\usepackage{hyperref}
\newtheorem{Theorem}{Theorem}
\newtheorem{Lemma}[Theorem]{Lemma}
\newtheorem{Corollary}[Theorem]{Corollary}
\newtheorem{Proposition}[Theorem]{Proposition}

\theoremstyle{definition}

\newtheorem{Remark}[Theorem]{Remark}

\usepackage{graphicx}

\newtheorem{innercustomgeneric}{\customgenericname}
\providecommand{\customgenericname}{}
\newcommand{\newcustomtheorem}[2]{%
  \newenvironment{#1}[1]
  {%
   \renewcommand\customgenericname{#2}%
   \renewcommand\theinnercustomgeneric{##1}%
   \innercustomgeneric
  }
  {\endinnercustomgeneric}
}

\newcustomtheorem{customthm}{Theorem}

\title[Transcendence of $p$-adic continued fractions]{Transcendence of $p$-adic continued fractions and a quantitative $p$-adic Roth theorem}

\author[A. Kalitzin]{Anne Kalitzin}
\address{Dipartimento di Matematica, Università di Trento, Via Sommarive 14, 38123, Povo (TN), Italy}
\email{anne.kalitzin@studenti.unitn.it }

\author[N. Murru]{Nadir Murru}
\address{Dipartimento di Matematica, Università di Trento, Via Sommarive 14, 38123, Povo (TN), Italy}
\email{nadir.murru@unitn.it}

\begin{document}

\begin{abstract}

In this paper, we improve some transcendence results for $p$--adic continued fractions. In particular, we prove that palindromic and quasi--periodic $p$--adic continued fractions converge either to transcendental numbers or quadratic irrationals, removing any restriction on the $p$--adic norm of the partial quotients (or convergents) considered in other works. Moreover, we provide a quantitative version of Ridout's theorem (the $p$--adic analogue of Roth's theorem), and we study the growth of denominators of convergents of algebraic numbers, establishing a $p$--adic version of a well--known result of Davenport and Roth.

\end{abstract}

\maketitle

\section{Introduction}

Several criteria ensuring that a continued fraction converges to a transcendental number have been established, typically under suitable assumptions on its partial quotients. For example, it is not difficult to show that if the partial quotients are unbounded and grow sufficiently rapidly, then the corresponding continued fraction represents a transcendental number \cite{Liou}. On the other hand, transcendence results have also been obtained in the case of bounded partial quotients, especially in connection with so-called quasi-periodic continued fractions. The first contributions in this direction are due to Maillet \cite{Mai} and Baker \cite{Bak}. These results were subsequently improved and generalized by several authors \cite{AB05, AB1, AB2, AB07b, ADQZ, Bug, Dav, Hone}. In particular, in \cite{AB2} the authors proved that a continued fraction beginning with arbitrarily long palindromes converges to a transcendental number. In \cite{Bug}, the authors provide transcendental continued fractions where the partial quotients form certain automatic sequences. 

More recently, these investigations have been extended to the field of $p$--adic numbers $\mathbb Q_p$. Several notions of continued fractions in $\mathbb Q_p$ have been introduced, the most important ones being due to Ruban \cite{Ruban} and Browkin \cite{Browkin}. Thus, it is natural to exploit $p$--adic continued fractions as a tool for constructing transcendental numbers in $\mathbb Q_p$. One of the first main results in this direction is due to Ooto \cite{Ooto} who studied Ruban continued fractions and proved the transcendence of certain quasi--periodic expansions, providing a $p$--adic counterpart of the classical results of Maillet and Baker. In \cite{LMS}, the authors established analogous statements for Browkin continued fractions, covering both the quasi-periodic and the palindromic cases. Then, \cite{CT} deals with general $p$--adic continued fractions and proves transcendence results when the partial quotients are given by certain automatic sequences. In this paper, we improve some results of \cite{LMS} and \cite{CT}.

In \cite{LMS}, the authors proved that the Browkin continued fraction $[0, b_1, b_2, \ldots]$ is transcendental or quadratic irrational under the assumptions that the sequence $(b_1, b_2, \ldots)$ begins with arbitrarily long palindromes, $|b_i|_\infty>C$ (for some constant $C>0$) and $|b_i|_p \geq p^4$ for all $i \gg 0$, where $|\cdot|_\infty$ is the Archimedean norm and $|\cdot|_p$ is the $p$--adic norm. This result has been improved in \cite{CT}, where the last two conditions are removed and only $|B_i|_p^{1/i}$ bounded is required, where $(B_i)_{i \geq 0}$ is the sequence of denominators of convergents. In the Ruban's case, the further condition $|b_i|_p \geq p^2$ is required.
In Section \ref{sec:main}, we are able to remove any restriction on the $p$--adic norm, proving the following theorem which holds for both Browkin and Ruban continued fractions.

\begin{customthm}{A}
Consider $\alpha \in \mathbb{Q}_{p}$ with $p$--adic continued fraction expansion 
\(\alpha=\left[0, b_{1}, b_{2}, \ldots\right],\)
where $\left(b_{i}\right)_{i \geq 1}$ is a sequence beginning with arbitrarily long palindromes. If $\max(|A_i|_\infty^{1/i}, |B_i|_\infty^{1/i}) < p^{1/4}$ for all $i\gg0$, then $\alpha$ is either transcendental or quadratic irrational.  
\end{customthm}

We recall the definition of quasi--periodic continued fractions. Let us consider the sequences of positive integers $(\lambda_i)_{i\geq 0}$, $(k_i)_{i\geq 0}$ and $(n_i)_{i\geq 0}$, with $(n_i)_{i\geq 0}$ strictly increasing and $\lambda_i\geq 2$. Let us suppose that the sequence $(b_n)_{n \geq 0}$ satisfies
\begin{equation*}
b_{m+k_i}=b_m, \ \ \text{        for } n_i\leq m\leq n_i+(\lambda_i-1)k_i-1.
\end{equation*}
The continued fraction $[b_0, b_1, \ldots]$ is called \textit{quasi--periodic}.
In \cite{LMS}, the authors proved that a quasi--periodic Browkin continued fractions $[0, b_1, b_2, \ldots]$ is transcendental or quadratic irrational under a very specific assumption of boundedness for $|b_i|_\infty$ and that $k_i$'s are bounded, $n_{i+1} \geq n_i + \lambda_i k_i$, $\lambda_i > C n_i$ (for a positive constant $C$) for all $i \gg 0$. A consequence of the results proved in \cite{CT} is that if $\alpha=[0, b_1, b_2, \ldots]$ contains arbitrarily long repetitions and $|b_i|_p \geq p^3$ (for Browkin's continued fractions) or $|b_i|_p \geq p^4$ (for Ruban's continued fractions), then $\alpha$ is transcendental or quadratic irrational. In section \ref{sec:main}, we prove a result more close to the one proved by Baker (\cite[Theorem 1]{Bak}).

\begin{customthm}{B}
Let $\alpha=[0,b_1,b_2,\dots]$ be a quasi--periodic $p$-adic continued fraction. 
Assume that for $i \gg 0$ the convergent $A_i/B_i$
satisfies \(
|A_{i}|_\infty \le |B_{i}|_\infty.
\)
If $k_i < C n_i$, for $i \gg 0$ and some constant $C>0$, and 
\begin{equation*} 
    \lim_{i \rightarrow \infty} \cfrac{\log \lambda_i (\log n_i)^{1/2}}{n_i} = \infty,
\end{equation*}
then $\alpha$ is transcendental or quadratic irrational.
\end{customthm}

In the classical real setting, the proofs of the results mentioned at the beginning rely primarily on Roth’s Theorem and the Subspace Theorem, together with their quantitative versions. In the $p$--adic setting, Ridout proved a $p$-adic version of Roth’s Theorem and Schlickewei obtained the corresponding analogue of the Subspace Theorem. To derive the results on quasi--periodic continued fractions, a quantitative version of Roth's theorem is necessary. However, to the best of our knowledge, no quantitative version of Ridout’s Theorem is currently available in a form suitable for our purposes. Thus, one of our main results presented in Section \ref{sec:main} is such a quantitative version which also yields a useful sufficient condition for the transcendence of $p$--adic continued fractions. Specifically, given $\alpha \in \mathbb Q_p$ algebraic of degree $\geq 2$ we prove that
\begin{itemize}

\item for every $0 < \epsilon \leq 1/3$, the number of solutions of
\[ \left| \alpha - \cfrac{A}{B} \right|_p < \cfrac{1}{|B|^{2+\epsilon}_\infty} \]
with $A, B \in \mathbb Z$, $B > 0$, $\gcd(A,B) = 1$ and $|B|_\infty \geq |A|_\infty$ is less than exp$(C_1 \epsilon^{-2})$, with $C_1$ positive constant depending only on $\alpha$;

\item let $(A_i)_{i \geq 0}$ and $(B_i)_{i \geq 0}$ be the numerators and denominators of convergents of $\alpha$, where
\begin{align*}
A_{i}=\frac{\tilde{A}_{i}}{p^{e_{i}}}, \quad B_{i}=\frac{\tilde{B}_{i}}{p^{f_{i}}},
\end{align*}
with $\tilde{A}_{i}, \tilde{B}_{i} \in \mathbb{Z}$, $\gcd(\tilde{A}_i, \tilde{B}_i)=1$ and $p \nmid \tilde{A}_i\tilde{B}_i$. Assuming that
$f_i \ge e_i$ and $|\tilde{A}_i p^{f_i - e_i}|_\infty \le |\tilde{B}_i|_\infty$, then
\[ \log \log |B_i|_p <  \cfrac{C_2 i}{(\log i)^{1/2}}\]
with $C_2$ positive constant depending only on $\alpha$.

\end{itemize}

\section{Preliminaries}

\subsection{Diophantine approximation in \texorpdfstring{$\mathbb Q_p$}{Qp}}

In Diophantine approximation, classical and important results show that algebraic numbers cannot have infinitely many 'good' approximations. The first result in this sense is due to Liouville, who proved that 
\[ \left| \alpha - \frac{p}{q} \right|_\infty > \frac{C}{q^d} \]
for all coprime integers $p$ and $q$, where $\alpha \in \mathbb R$ is an algebraic number of degree $d$ and where $C$ is a positive constant depending only on $\alpha$. As an immediate consequence, it is possible to prove that, for every $\epsilon > 0$, the inequality 
\[ \left| \alpha - \frac{p}{q} \right|_\infty < \frac{1}{q^{d+\epsilon}} \]
is satisfied by finitely many coprime integers $p$ and $q$. This result was improved by Thue, Siegel, Dyson, culminating with the celebrated Roth theorem \cite{Roth}.

\begin{Theorem}
Let $\alpha \in \mathbb R$ be an algebraic number of degree $\geq 2$. Then, for every $\epsilon > 0$, the inequality
\[ \left| \alpha - \frac{p}{q} \right|_\infty < \frac{1}{q^{2+\epsilon}} \]
has only finitely many solutions in the integers $p$ and $q$, with $q > 0$.
\end{Theorem}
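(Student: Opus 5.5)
The statement is Roth's theorem, cited from \cite{Roth}; in the paper it serves only as background for the $p$--adic analogue. If I had to prove it, I would follow Roth's original strategy: assume infinitely many good approximations and contradict this with an auxiliary polynomial in many variables. Fix $\epsilon>0$ and suppose there are infinitely many reduced fractions $p/q$ with $|\alpha-p/q|_\infty<q^{-2-\epsilon}$. Let $d$ be the degree of $\alpha$. Choose a large number $m$ of variables (depending on $\epsilon$ and $d$) and a small parameter $\delta>0$. Then pick approximations $p_1/q_1,\dots,p_m/q_m$ with $q_1$ huge and each $q_{h+1}$ so much larger than $q_h$ that $\log q_{h+1}/\log q_h$ exceeds a prescribed bound. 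Set $r_1$ large and $r_h\approx r_1\log q_1/\log q_h$, so that all $q_h^{r_h}$ are of comparable size.

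\textbf{Step 1 (auxiliary polynomial).} Using Siegel's lemma, construct a nonzero $P\in\mathbb Z[X_1,\dots,X_m]$ of degree at most $r_h$ in $X_h$, with coefficients bounded by $B^{r_1+\dots+r_m}$ for some $B$ depending on $\alpha$. Require that $P$ have index at least $m(1/2-\delta)$ at $(\alpha,\dots,\alpha)$ with respect to the weights $(r_1,\dots,r_m)$. This is possible because, by a concentration-of-measure (law of large numbers) count, the proportion of monomials with weighted index below $m(1/2-\delta)$ is exponentially small in $m$. Hence the number of linear conditions (times $d$, to pass to rational conditions) is smaller than the number of unknowns.

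\textbf{Step 2 (small value at the approximations).} Expand each derivative $\partial^{\mathbf j}P$ of low weighted order $\sum_h j_h/r_h$ in Taylor series around $(\alpha,\dots,\alpha)$. The high index at $\alpha$ and the approximation property $|\alpha-p_h/q_h|<q_h^{-2-\epsilon}$ then give
\[ \bigl|\partial^{\mathbf j}P(p_1/q_1,\dots,p_m/q_m)\bigr| < q_1^{-r_1(2+\epsilon)(1/2-2\delta)m}\cdot B'^{\,\sum r_h}. \]
On the other hand, this value is a rational number with denominator dividing $\prod_h q_h^{r_h}\approx q_1^{mr_1}$. If it were nonzero, its absolute value would be at least $q_1^{-mr_1}$. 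For $\delta$ small relative to $\epsilon$ and $q_1$ large, the two bounds are incompatible. Therefore every such derivative vanishes, i.e. $P$ has index at least some fixed $\theta>0$ at $(p_1/q_1,\dots,p_m/q_m)$.

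\textbf{Step 3 (Roth's lemma), the main obstacle.} We must show that a polynomial with the above degree and height bounds cannot have index $\ge\theta$ at a rational point $(p_h/q_h)$ whose heights grow sufficiently fast and satisfy $q_h^{r_h}\ge q_1^{r_1}$. This nonvanishing result is the heart of the proof. I would prove it by induction on $m$ using generalized Wronskians. Find a nonvanishing Wronskian-type determinant $\det\bigl(\partial^{\mathbf i_\mu}\phi_\nu\bigr)$ that factors as $U(X_1,\dots,X_{m-1})V(X_m)$. Apply the induction hypothesis to $U$ and the case $m=1$ (Gauss's lemma, dividing off $(q_mX_m-p_m)$ powers) to $V$. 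Then transfer the index bounds back to $P$ via the relation between the index of $P$ and the index of this determinant. This yields index $<\theta$, provided the ratios $r_{h+1}/r_h$ are small and $q_1$ is large. Those constraints are exactly what we imposed when choosing the parameters.

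\textbf{Step 4 (conclusion).} Steps 2 and 3 contradict each other once the parameters are chosen in the correct order: first $m$, then $\delta$ and $\theta$, then $q_1$, then the remaining $q_h$ and the $r_h$. Hence only finitely many solutions exist. The bookkeeping of parameter order is routine; the real difficulty lies in the inductive Wronskian argument of Step 3.
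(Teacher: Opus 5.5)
Your outline is correct, and it is exactly the argument the paper relies on. The paper does not prove this statement; it quotes it from \cite{Roth} as background. Your steps are the standard proof of Roth's theorem: a Siegel-lemma auxiliary polynomial of index $\ge m(1/2-\delta)$ at $(\alpha,\dots,\alpha)$, forced vanishing of low-order derivatives at the approximations, Roth's lemma via the Wronskian induction, and the final parameter bookkeeping.

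One small bookkeeping point concerns the order of parameters. Your Step 1 count needs $m \gg \delta^{-2}\log d$, while Step 2 needs $\delta \ll \epsilon$. So the deficit $\delta$ should be fixed in terms of $\epsilon$ before $m$ is taken large, rather than made small after $m$ is chosen, as your Step 4 ordering suggests.
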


Subsequently, Roth’s theorem was extended to higher dimensions in the equally celebrated Schmidt Subspace Theorem \cite{subspace}.

\begin{Theorem}
Let $L_1, \ldots, L_n$ be linear forms, linearly independent over $\mathbb Q$, in $n$ variables with real algebraic coefficients. Then, for every $\epsilon > 0$, the solutions $(x_1, \ldots, x_n) \in \mathbb Z^n$ of
\[ 0 < \prod_{i=1}^n |L_i(x_1, \ldots, x_n)|_\infty < \frac{1}{\max(|x_1|_\infty, \ldots, |x_n|_\infty)^{\epsilon}} \]
belong to a finite union of proper linear subspaces of $\mathbb Q^n$.
\end{Theorem}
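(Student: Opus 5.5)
This statement is Schmidt's Subspace Theorem, which the paper quotes from \cite{subspace} and does not prove. A self-contained proof is far beyond its scope. If I had to prove it, I would follow Schmidt's strategy: a reduction to a parametric problem in the geometry of numbers, followed by a Roth-type auxiliary polynomial argument.

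\textbf{Step 1: normalisation.} First I would reduce to the case where the $L_i$ have coefficients in a fixed number field $K$ and $\det(L_1,\dots,L_n)=1$. Then I would cover the solutions by finitely many \emph{classes}: for a solution $\mathbf x$ with $H=\max|x_j|_\infty$, write $|L_i(\mathbf x)|_\infty = H^{c_i}$. The vectors $(c_1,\dots,c_n)$ lie in a compact region and satisfy $\sum c_i<-\epsilon$. So, after a slight loss of $\epsilon$, finitely many tuples $\mathbf c$ suffice.

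\textbf{Step 2: parametric formulation.} For each $\mathbf c$ I would consider the convex body
\[ \Pi(Q)=\{\mathbf y\in\mathbb R^n : |L_i(\mathbf y)|_\infty\le Q^{c_i}\}. \]
Its volume is $Q^{\sum c_i}\le Q^{-\epsilon}$, and every solution in the class lies in $\Pi(H)$. By Minkowski's second theorem the successive minima satisfy $\lambda_1\cdots\lambda_n\asymp Q^{-\sum c_i}$, so $\lambda_1\le Q^{-\delta}$ for some $\delta>0$ depending on $\epsilon$. The goal becomes the \emph{parametric subspace theorem}: there is a finite set of proper subspaces $T_1,\dots,T_t$ such that, for all large $Q$, the lattice points of $\Pi(Q)$ span a space contained in some $T_j$.

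\textbf{Step 3: the auxiliary polynomial (main obstacle).} This is where the diophantine input enters, and I expect it to be the hardest part. Following Roth, I would assume a long sequence $Q_1<Q_2<\dots<Q_m$, with rapidly growing ratios $\log Q_{h+1}/\log Q_h$, for which the lattice points span a fixed subspace not contained in any of the proposed $T_j$. I would then construct, via Siegel's lemma, a multihomogeneous polynomial $P$ in $m$ blocks of $n$ variables. It should have small integer coefficients and large index along the product of the subspaces defined by the $L_i$. Evaluating suitable derivatives of $P$ at the $m$ small lattice points gives algebraic integers. These are smaller than $1$ in absolute value, so they vanish. A non-vanishing result, namely Roth's lemma or Schmidt's generalisation of it to products of Grassmannian points, then yields a contradiction.

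\textbf{Step 4: induction and descent.} To pass from the minimum $\lambda_1$ to the full span I would use Mahler's compound bodies. Applying the same argument to exterior powers $\wedge^k$ controls the first $k$ minima simultaneously. This shows that for large $Q$ the first $n-1$ minima are attained in finitely many subspaces. Finally, I would induct on $n$ by restricting the forms to each exceptional subspace, where the problem has the same shape in lower dimension. The base case $n=2$ is essentially Theorem~1 (Roth).

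The real work is the index and non-vanishing estimate in Step 3, together with the bookkeeping that makes the set of exceptional subspaces independent of $Q$. The rest is comparatively formal geometry of numbers.
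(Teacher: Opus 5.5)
You are right that the paper does not prove this statement. It is Schmidt's Subspace Theorem, quoted from \cite{subspace} as background; only the $p$-adic analogue, Theorem~\ref{thm:psubspace}, is ever used. So a citation is all that is required, and your outline has the right ingredients. However, two of its steps fail as written.

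\textbf{Step~2.} The appeal to Minkowski is a non sequitur. Since $\operatorname{vol}\Pi(Q)\asymp Q^{\sum c_i}\le Q^{-\epsilon}$, Minkowski's second theorem gives $\lambda_1\cdots\lambda_n\gg Q^{\epsilon}$. That is a \emph{lower} bound on the product, and it cannot make $\lambda_1$ small. What you actually have is $\lambda_1(\Pi(H))\le 1$, simply because $\mathbf x\in\Pi(H)$. Renormalise to $c_i'=c_i-\frac1n\sum_j c_j$, so that $\sum_i c_i'=0$ and $\operatorname{vol}\Pi'(Q)=2^n$. Then $\lambda_1'(H)\le H^{-\epsilon/n}$, up to the loss from Step~1. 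Minkowski's theorem is what converts this into large gaps \emph{above} $\lambda_1'$: first $\lambda_n'(H)\gg H^{\epsilon/(n(n-1))}$, and hence $\lambda_{k+1}'/\lambda_k'\gg H^{\epsilon/(n-1)^2}$ for some $k$ such that $\mathbf x$ lies in the span of the first $k$ minimal points.

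\textbf{Step~3: the genuine gap.} A minor point first: vanishing must come from the values being rational integers, i.e.\ $P$ with coefficients in $\mathbb Z$ evaluated at integer points. An algebraic integer such as $\sqrt2-1$ has absolute value $<1$ without vanishing.

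The serious problem is that for $n\ge3$, large index of $P$ at the point $(\mathbf x_1,\dots,\mathbf x_m)$ contradicts nothing. If all the $\mathbf x_h$ lie on one rational hyperplane $M=0$ of small height, then $\prod_h M(\mathbf X^{(h)})^{\lceil\theta r_h\rceil}$ has large index there and small coefficients. So no Roth lemma at points of $\mathbb P^{n-1}$ can hold. This is exactly how exceptional subspaces arise. For instance, take $L_1=x_1$, $L_2=x_2-\sqrt2\,x_1$, $L_3=x_3-\sqrt2\,x_1$ and $\epsilon<1$: the points $(q,p,p)$ with $|q\sqrt2-p|<q^{-1}$ are infinitely many solutions on $x_2=x_3$.

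Schmidt's argument instead proves a statement about \emph{hyperplanes}. If $\lambda_n(Q)\ge Q^{\delta}\lambda_{n-1}(Q)$, then the hyperplane $S(Q)$ spanned by the first $n-1$ minimal points $\mathbf g_1,\dots,\mathbf g_{n-1}$ lies in a finite set. The argument runs as follows.
\begin{itemize}
\item Substitute $\mathbf X^{(h)}=\sum_{j<n}t_j^{(h)}\mathbf g_j^{(h)}$ and show that the resulting integer coefficients, including those coming from derivatives, are $<1$. Then $P$ has large index along $S(Q_1)\times\cdots\times S(Q_m)$.
\item This requires \emph{all} of $\lambda_1,\dots,\lambda_{n-1}$ to be small, not just $\lambda_1$. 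When $\lambda_n/\lambda_{n-1}$ is large, Davenport's lemma arranges this by re-weighting the exponents; this ingredient is missing from your outline.
\item The contradiction then comes from a Roth lemma for the index with respect to linear forms $M_h(\mathbf X^{(h)})$, namely the normals of the $S(Q_h)$, whose heights increase rapidly. This, rather than a statement at points, is the non-vanishing result that is needed.
\end{itemize}

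\textbf{Step~4.} Mahler's compounds do enter as you suggest, but what they transfer is this hyperplane statement. Using $\wedge^{n-k}$, they carry it from the last gap to a gap $\lambda_{k+1}/\lambda_k$ at any place. This gives finitely many $k$-dimensional spans of first minimal points, and by the corrected Step~2 these contain all solutions. No final induction on $n$ is needed, since the theorem asserts nothing about solutions inside the exceptional subspaces.
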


All these results have counterparts in the field of $p$--adic numbers $\mathbb Q_p$. We recall that $\mathbb Q_p$ is the completion of $\mathbb Q$ with respect to the $p$--adic norm $| \cdot |_p$, where $p$ is a given prime number. 

The $p$--adic norm of a rational number $x = a/b$ is defined as
\[ |x|_p := p^{-v_p(x)}, \]
where $v_p(\cdot)$ is the $p$--adic valuation, i.e.,
$v_p(x) = v_p(a) - v_p(b)$
and the $p$--adic valuation of an integer is the largest exponent of $p$ dividing it. Thus, a $p$--adic number can be expressed as a formal power series
\[\sum_{i=r}^{+\infty} a_i p^i,\]
for some $r \in \mathbb Z$ and $a_i$ in a complete set of representatives modulo $p$. In 1958, Ridout \cite{Ridout} proved a $p$--adic version of the Roth theorem.

\begin{Theorem} \label{thm:Ridout}
Given $f(X) \in \mathbb Z[X]$ of degree $\geq 2$, let $\alpha$ be a real root of $f(X)$ and $\alpha_i$ be $p_i$--adic roots of $f(X)$ for $i = 1, \ldots, t$ (with $p_1, \ldots, p_t$ distinct primes). Then, for every $\epsilon > 0$, the inequality
\[ \min \left(1, \left| \alpha - \frac{A}{B} \right|_\infty\right) \prod_{i=1}^t \min \left(1, \left| \alpha_i - \frac{A}{B} \right|_{p_i}\right) < \frac{1}{\max(|A|_\infty,|B|_\infty)^{2+\epsilon}} \]
has only finitely many integer solutions $A$ and $B$ with $\gcd(A,B)=1$ and $B > 0$.
\end{Theorem}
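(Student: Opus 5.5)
We plan to follow Roth's original method, adapted to several places simultaneously as Ridout did, rather than deduce the statement from the real Subspace Theorem. The latter is stated only for archimedean linear forms and does not see the $p_i$--adic factors. The guiding observation is that $\alpha$ and all the $\alpha_i$ are roots of the \emph{same} polynomial $f\in\mathbb Z[X]$. Hence the vanishing conditions on an auxiliary polynomial can be imposed purely algebraically over $\mathbb Q$: we ask that suitable derivatives of $P$ be divisible by powers of $f(X_h)$. Such a $P$ then vanishes to high index at $(\alpha,\dots,\alpha)$ in $\mathbb R^m$ and at $(\alpha_i,\dots,\alpha_i)$ in $\mathbb Q_{p_i}^m$ at the same time.

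\textbf{Step 1.} Suppose the inequality has infinitely many coprime solutions $(A,B)$. By pigeonholing on the sizes of the individual factors, pass to an infinite subfamily on which each factor $\min(1,|\alpha-A/B|_\infty)$ and $\min(1,|\alpha_i-A/B|_{p_i})$ is roughly $H^{-\mu_0}$, respectively $H^{-\mu_i}$. Here $H=\max(|A|_\infty,|B|_\infty)$, the exponents $\mu_0,\dots,\mu_t\ge 0$ are fixed up to $\epsilon/10$, and $\sum\mu_j\ge 2+\epsilon/2$.

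\textbf{Step 2.} Fix $m$ large in terms of $\epsilon$ and $\deg f$. By Siegel's lemma, construct $P\in\mathbb Z[X_1,\dots,X_m]$ with $\deg_{X_h}P\le r_h$ and controlled height, whose index at $(\alpha,\dots,\alpha)$ is at least $(m/2)(1-\delta)$ with respect to the weights $r_h$. By the remark above, the same index bound then holds automatically at each $(\alpha_i,\dots,\alpha_i)$.

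\textbf{Step 3.} Choose solutions $A_h/B_h$, $h=1,\dots,m$, from the subfamily with very rapidly increasing heights $H_h$. Put $r_h\approx r_1\log H_1/\log H_h$. Roth's lemma, which is purely algebraic and so carries over unchanged, gives a derivative $Q=\partial P$ of small order that does not vanish at $(A_1/B_1,\dots,A_m/B_m)$. The quantity $N=Q(A_1/B_1,\dots)\prod_h B_h^{r_h}$ is then a nonzero integer.

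\textbf{Step 4.} Estimate $N$ at each place by Taylor expansion around $\alpha$, respectively $\alpha_i$. Every surviving monomial carries a factor $\prod_h(A_h/B_h-\alpha)^{j_h}$ with weighted degree at least about $m/2$, so $|N|_\infty\lesssim H^{\,r_1(\ldots)}\cdot H_1^{-r_1\mu_0 m/2}$. In the $p_i$--adic case the Taylor coefficients are $p_i$--integral up to a bounded denominator, giving $|N|_{p_i}\lesssim H_1^{-r_1\mu_i m/2}$. Multiplying these and using $\prod_v|N|_v\ge 1$ (the product formula, with $|N|_q\le 1$ at the other primes) yields $\sum_j\mu_j\le 2+O(\delta)$, which contradicts Step 1.

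The main obstacle is Step 4 at the $p$--adic places. We must make sure the archimedean size of the $B_h^{r_h}$ is charged only once, while each place contributes its own saving. Since the solutions in Ridout's theorem need not have $p$--power denominators, the $p$--adic estimate cannot absorb any height. The bookkeeping must therefore split the exponent $2$ as ``one from the height of $B_h$, one from the height of $A_h$'' against the total saving $\sum\mu_j$. We would first try to carry this out by normalizing with $\max(|A_h|,|B_h|)$ in place of $B_h$, writing $N$ as a form in $(A_h,B_h)$ that is homogeneous of degree $r_h$ in each pair. Roth's lemma itself (Step 3) we would take as a black box from the real case.
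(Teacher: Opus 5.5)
The paper does not reprove this statement. It quotes Ridout, and in Section 3.2 it only uses the architecture of his argument: $m$ large, $\delta$ small, $m$ solutions of rapidly increasing height, an auxiliary polynomial of index about $m/2$, and Roth's lemma. Your outline is that same Roth--Ridout scheme, so the route is right. As written, however, two steps are defective, and the second is exactly the one you leave open.

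\emph{The auxiliary polynomial.} Taken literally, asking that derivatives of $P$ be divisible by powers of $f(X_h)$ in $\mathbb Z[X_1,\dots,X_m]$ forces high index at every point of $Z^m$, where $Z$ is the set of roots of $f$. That cannot reach index near $m/2$ once $n=\deg f\ge 3$. For $m=1$, a polynomial of degree $r$ has order $\ge\theta r$ at $n$ points only if $\theta\le 1/n$. In general, an induction on $m$ shows that the average index of a nonzero $P$ over $Z^m$ is at most $m/n$.

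The condition you need lives on the diagonal: $f(X)\mid(\partial_{\mathbf i}P)(X,\dots,X)$ for every $\mathbf i$ with $\sum_h i_h/r_h<\theta$. This costs $n$ rational linear conditions per $\mathbf i$, so Siegel's lemma works as in Roth's case. It gives index $\ge\theta$ at $(\beta,\dots,\beta)$ for \emph{every} root $\beta$ of $f$. Imposing the index only at $(\alpha,\dots,\alpha)$, as your Step 2 says, transfers to $(\alpha_i,\dots,\alpha_i)$ only when $\alpha_i$ is conjugate to $\alpha$, and the statement allows reducible $f$.

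\emph{The final comparison.} Step 4 is not an obstacle, and the heuristic you propose for it is wrong: the exponent $2$ is not ``one from $B_h$, one from $A_h$''. Take $N=\prod_h B_h^{r_h}\,Q(A_1/B_1,\dots,A_m/B_m)$ with $r_1\log H_1\le r_h\log H_h\le(1+\delta)r_1\log H_1$. Let $\theta'\approx m/2$ be the index of $Q$ on the diagonal. Taylor expansion gives
\[
|N|_\infty\le C^{r_1+\cdots+r_m}\Bigl(\prod_h H_h^{r_h}\Bigr)H_1^{-\mu_0 r_1\theta'},\qquad
|N|_{p_i}\le C^{r_1+\cdots+r_m}H_1^{-\mu_i r_1\theta'} .
\]
The three places are handled as follows.
\begin{itemize}
\item At $p_i$ you discard $|B_h|_{p_i}\le 1$ and use the ultrametric inequality.
\item The factor $\max(1,|\alpha_i|_{p_i})^{r_1+\cdots+r_m}$ is not a bounded denominator, but it is negligible because $\log H_1$ is huge.
\item At $\infty$ with $\mu_0=0$ you use the trivial bound.
\end{itemize}
So the whole height $\prod_h H_h^{r_h}\le H_1^{(1+\delta)mr_1}$ is paid once, at the archimedean place. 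Then $1\le|N|_\infty\prod_i|N|_{p_i}$ yields $\theta'\sum_j\mu_j\le(1+2\delta)m$, i.e.\ $\sum_j\mu_j\le 2+O(\delta)$.

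Thus the missing exponent in your bound for $|N|_\infty$ is $m$, and the $2$ is the ratio $m/\theta'$ of weighted degree to index, exactly as in Roth's real proof. This is where Ridout's condition $2m(1+5\delta)/(m-4(1+3\delta)nm^{1/2}-2\eta)<2+\epsilon$, recalled in Section 3.2, comes from. With these two fixes your sketch is sound. One small correction in Step 1: use boxes of width $\epsilon/(4(t+1))$ rather than $\epsilon/10$, since the $t+1$ errors add up.
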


Finally, we also have the $p$--adic subspace theorem due to Schlickewei \cite{Sch}. We give here the following formulation (see, e.g., \cite{Bombieri}).

\begin{Theorem} \label{thm:psubspace}
Let $S= \{p, \infty\}$ be a finite set of places. Given $n \geq 2$, consider $L_{1, \infty}, \ldots, L_{n, \infty}$ independent linear forms in $n$ variables with real algebraic coefficients and $L_{1, p}, \ldots, L_{n, p}$ independent linear forms in $n$ variables with $p$-adic algebraic coefficients. Then, for every $\epsilon>0$ there are a finite number of proper rational subspaces such that every $(x_1, \ldots, x_n) \in \mathbb{Z}\left[p^{-1}\right]^{n} \setminus \{0\}$ satisfying
$$
\prod_{v \in S}\prod_{i = 1}^n\left|L_{i, v}(x_1, \ldots, x_n)\right|_v <  \frac{1}{\max\left\{\left|x_{1}\right|_{\infty}, \ldots,\left|x_{n}\right|_{\infty}\right\}^{\epsilon} \max \left\{\left|x_{1}\right|_{p}, \ldots,\left|x_{n}\right|_{p}\right\}^{\epsilon}}
$$
lies in one of these subspaces.
\end{Theorem}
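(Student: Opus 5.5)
The statement is Schlickewei's $p$--adic Subspace Theorem, which the paper quotes from the literature. I would therefore not reprove it from scratch. Instead I would derive this formulation from the general $S$--adic Subspace Theorem over $\mathbb Q$, in the form given in \cite{Bombieri} (Schlickewei's theorem with the normalisation by the height). That general statement reads as follows. Let $S$ be a finite set of places containing $\infty$, and for each $v\in S$ let $L_{1,v},\dots,L_{n,v}$ be linearly independent linear forms with coefficients algebraic over $\mathbb Q_v$. Then for every $\epsilon>0$ the solutions $\mathbf x\in\mathbb Q^n\setminus\{0\}$ of
\[
\prod_{v\in S}\prod_{i=1}^n \frac{|L_{i,v}(\mathbf x)|_v}{\|\mathbf x\|_v} < H(\mathbf x)^{-n-\epsilon}
\]
lie in finitely many proper rational subspaces. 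Here $\|\mathbf x\|_v=\max_j |x_j|_v$ and $H(\mathbf x)=\prod_{\text{all } v}\|\mathbf x\|_v$. The plan is to show that, for $S=\{p,\infty\}$, every solution of the inequality in Theorem \ref{thm:psubspace} satisfies this normalised inequality.

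The key steps are the following.

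\begin{enumerate}
\item Compare the height with the $S$--part of the norm. Take $\mathbf x\in\mathbb Z[p^{-1}]^n\setminus\{0\}$. For every prime $q\neq p$ the coordinates are $q$--adic integers, so $\|\mathbf x\|_q\le 1$. Hence
\[
H(\mathbf x)\le \prod_{v\in S}\|\mathbf x\|_v=\max_j|x_j|_\infty\cdot\max_j|x_j|_p .
\]
\item Bound the right-hand side of the hypothesis. The right-hand side of the inequality in Theorem \ref{thm:psubspace} is exactly $\bigl(\prod_{v\in S}\|\mathbf x\|_v\bigr)^{-\epsilon}$. By step 1 this is at most $H(\mathbf x)^{-\epsilon}$.
\item Divide by the norms. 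Dividing the hypothesis by $\prod_{v\in S}\|\mathbf x\|_v^{\,n}$, and using step 1 once more, gives
\[
\prod_{v\in S}\prod_{i}\frac{|L_{i,v}(\mathbf x)|_v}{\|\mathbf x\|_v}<H(\mathbf x)^{-\epsilon}\bigl(\textstyle\prod_{v\in S}\|\mathbf x\|_v\bigr)^{-n}\le H(\mathbf x)^{-n-\epsilon}.
\]
\item Conclude. Every solution of the inequality in Theorem \ref{thm:psubspace} is therefore a solution of the normalised inequality, and the general theorem places all of them in finitely many proper rational subspaces.
\end{enumerate}

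The main obstacle is not this reduction, which is elementary bookkeeping with the product formula and the integrality of $\mathbf x$ away from $p$. The obstacle is the general $S$--adic Subspace Theorem itself. Its proof (Schmidt's method adapted by Schlickewei) uses Minkowski's second theorem on successive minima over adelic convex bodies and the construction of auxiliary polynomials with Roth's lemma. A genuine proof would require all of this machinery, so I would cite it. The only real care needed is to make sure the cited version allows, at each place $v$, forms with coefficients algebraic over $\mathbb Q_v$ (algebraic $p$--adic coefficients at $v=p$), and that the normalisation of $|\cdot|_v$ matches the one used here. Over $\mathbb Q$ no extra exponent appears, so this last check is immediate.
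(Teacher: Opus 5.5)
Your proposal is correct and follows the same route as the paper, which also does not prove this statement but quotes Schlickewei's $p$--adic Subspace Theorem from the literature. Your derivation from the height-normalised $S$--adic version is valid: it uses $\|\mathbf{x}\|_q\le 1$ for all primes $q\neq p$, hence $H(\mathbf{x})\le \max_j|x_j|_\infty\cdot\max_j|x_j|_p$, and it makes explicit the translation the paper leaves implicit (if the version you cite requires all $L_{i,v}(\mathbf{x})\neq 0$, the remaining points lie in the at most $2n$ proper rational subspaces $\ker L_{i,v}\cap\mathbb{Q}^n$).
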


\subsection{\texorpdfstring{$p$}{p}--adic continued fractions}

Classical continued fractions can also be defined in $\mathbb Q_p$. Indeed, it is sufficient to introduce a suitalbe $p$--adic floor function and then apply the same algorithm used over the real numbers. However, in $\mathbb Q_p$, there is no canonical definition of floor function, and for this reason several different definitions of $p$--adic continued fractions have been proposed. The most classical ones are the continued fractions introduced by Ruban \cite{Ruban} and Browkin \cite{Browkin}, both of which are based on the following function:
\[ s: \mathbb Q_p \rightarrow \mathbb Q, \quad s(\alpha) = \sum_{i=r}^0 a_i p^i.\]
The difference between the two approaches lie in the choice of the digits $a_i$. In Ruban's definition they belong to $\{0, \ldots, p-1\}$, while in Browkin's case they are taken in $\{-\frac{p-1}{2}, \ldots, \frac{p-1}{2}\}$ (in the following we always assume $p$ odd). This small difference leads to significantly different behaviours. For instance, with Browkin's definition, every rational number has a finite expansion, whereas in Ruban's case rational numbers may have either finite or periodic expansions. Moreover, it is well known that the analogue of Lagrange's theorem fails in Ruban's continued fraxtions, while for Browkin's continued fractions this is still an open problem. 
In this paper, our results hold for both the Browkin and the Ruban definitions. Thus, given $\alpha_0 = \sum_{i=r}^{+\infty} a_i p^i$ in $\mathbb Q_p$ with $a_i \in \{-\frac{p-1}{2}, \ldots \frac{p-1}{2}\}$ (Browkin) or $a_i \in \{0, \ldots, p-1\}$ (Ruban), the continued fraction expansion of $\alpha_0$ is $[b_0, b_1, \ldots]$, where the partial quotients are evaluated by
\[ \begin{cases} b_i = s(\alpha_i) \cr \alpha_{i+1} = \cfrac{1}{\alpha_i - b_i}   \end{cases}\]
for $i = 0, 1, \ldots$. Let us observe that $v_p(\alpha_i) = v_p(b_i) < 0$  or equivalently $|\alpha_i|_p = |b_i|_p \geq p$ for all $i \geq 1$. We define the convergents of a continued fraction as 
$$\frac{A_i}{B_i} := [b_0, \ldots, b_i], \quad i = 0, 1, \ldots$$
where the sequences $(A_i)_{i \geq 0}$ and $(B_i)_{i \geq 0}$ satisfy the following recurrences:

\begin{align*}
\left\{\begin{array}{l}
A_{-2}=0, \quad A_{-1}=1\\
A_{i}=b_{i} A_{i-1}+A_{i-2}, \ \forall i \geq 0
\end{array}, \quad\left\{\begin{array}{l}
B_{-2}=1, \quad B_{-1}=0 \\
B_{i}=b_{i} B_{i-1}+B_{i-2}, \ \forall i \geq 0.
\end{array}\right.\right.
\end{align*}

Moreover, the classical identities for continued fractions hold:
\[ \alpha_0 = \cfrac{\alpha_{i+1} A_i + A_{i-1}}{\alpha_{i+1} B_i + B_{i-1}}, \quad A_{i} B_{i-1} - B_{i} A_{i-1} = (-1)^i. \]

It is easy to prove by induction that
\begin{equation} \label{eq:vB} 
v_p(B_i) = \sum_{j=1}^i v_p(b_j), \quad |B_i|_p = \prod_{j=1}^i |b_j|_p, \quad \forall i \geq 1. 
\end{equation}
Similarly, if $b_0 = 0$, we have
\begin{equation} \label{eq:vA0}
v_p(A_i) = \sum_{j=2}^i v_p(b_j), \quad |A_i|_p = \prod_{j=2}^i |b_j|_p, \quad \forall i \geq 2.
\end{equation}
If $b_0 \not=0$, then
\[v_p(A_i) = \sum_{j=0}^i v_p(b_j), \quad |A_i|_p = \prod_{j=0}^i |b_j|_p, \quad \forall i \geq 0.\]

In the following proposition, we summarize some further properties that will be useful in the paper.

\begin{Proposition} \label{prop:several}
Given $\alpha \in \mathbb Q_p$, consider the sequence of convergents $(A_i/B_i)_{i \geq -2}$ of its $p$--adic continued fraction expansion, then
\begin{enumerate}
\item $\left| \alpha - \cfrac{A_i}{B_i} \right|_p < \cfrac{1}{\left| B_i\right|_p^2}$, for all $i \geq 0$, see \cite{Browkin, Ruban}.
\item If $\beta \in \mathbb Q_p$ has a $p$--adic continued fraction expansion where the first $i+1$ partial quotients are equal to those of $\alpha$, then $|\alpha-\beta|_p < \cfrac{1}{|B_i|_p^2}$, see \cite{LMS, Ooto}.
\item For all $i \geq-2$, we have $\left|B_{i}\right|_{\infty} \leq\left|B_{i}\right|_{p}$. Moreover, if $\left|b_{0}\right|_{p} \neq 1$, then $\left|A_{i}\right|_{\infty} \leq\left|A_{i}\right|_{p}$ for all $i \geq-2$, see \cite{LMS, Ooto}.
\end{enumerate}
\end{Proposition}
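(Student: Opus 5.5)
The three items are classical; we prove them directly from the identities $\alpha_0 = (\alpha_{i+1}A_i + A_{i-1})/(\alpha_{i+1}B_i + B_{i-1})$ and $A_iB_{i-1}-A_{i-1}B_i=(-1)^i$, using $|\alpha_{j}|_p = |b_j|_p \geq p$ for $j\ge 1$ and \eqref{eq:vB}.

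For (1), subtract to get
\[ \alpha - \frac{A_i}{B_i} = \frac{A_{i-1}B_i - A_iB_{i-1}}{B_i(\alpha_{i+1}B_i+B_{i-1})} = \frac{\pm 1}{B_i(\alpha_{i+1}B_i+B_{i-1})}. \]
By \eqref{eq:vB}, $|B_i|_p = |b_i|_p|B_{i-1}|_p \ge p|B_{i-1}|_p$. Hence $|\alpha_{i+1}B_i|_p \ge p^2 |B_{i-1}|_p > |B_{i-1}|_p$. The ultrametric inequality then gives $|\alpha_{i+1}B_i+B_{i-1}|_p = |b_{i+1}|_p|B_i|_p$, so
\[ \left|\alpha-\frac{A_i}{B_i}\right|_p = \frac{1}{|b_{i+1}|_p|B_i|_p^2}\le \frac{1}{p|B_i|_p^2}. \]
If the expansion terminates at step $i$, the left side is $0$.

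For (2), write $\beta=(\beta_{i+1}A_i+A_{i-1})/(\beta_{i+1}B_i+B_{i-1})$ with the same $A_i,B_i$, where $|\beta_{i+1}|_p\ge p$ (or $\beta = A_i/B_i$, which reduces to (1)). The determinant identity gives
\[ \alpha-\beta=\frac{\pm(\alpha_{i+1}-\beta_{i+1})}{(\alpha_{i+1}B_i+B_{i-1})(\beta_{i+1}B_i+B_{i-1})}. \]
As in (1), the denominator has norm $|\alpha_{i+1}|_p|\beta_{i+1}|_p|B_i|_p^2$. The numerator has norm at most $\max(|\alpha_{i+1}|_p,|\beta_{i+1}|_p)$. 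Therefore $|\alpha-\beta|_p\le 1/(\min(|\alpha_{i+1}|_p,|\beta_{i+1}|_p)|B_i|_p^2)\le 1/(p|B_i|_p^2)$.

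For (3), we use induction via the recurrence. The key local estimate compares the two norms of a single partial quotient. Write $b=s(\alpha_i)=\sum_{j=r}^0 a_jp^j$ with $r\le -1$, so $|b|_p=p^{-r}$. In the Ruban case,
\[ |b|_\infty\le (p-1)\sum_{k=0}^{-r}p^{-k} = p-p^{r}. \]
The Browkin case gives the even smaller bound $|b|_\infty<p/2$. We then claim $|b|_\infty + 1/p\le |b|_p$. For $r=-1$ this reads $p-1/p+1/p=p$. For $r\le -2$ it follows from $p+1/p\le p^2$.

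Now induct on $i$ to show $|B_i|_\infty\le |B_i|_p$. The base cases $B_{-2}=1$, $B_{-1}=0$, $B_0=1$, $B_1=b_1$ are immediate. For $i\ge 2$, using the hypothesis for $i-1,i-2$ and $|B_{i-2}|_p\le p^{-1}|B_{i-1}|_p$, we get
\[ |B_i|_\infty\le |b_i|_\infty|B_{i-1}|_p+|B_{i-2}|_p\le (|b_i|_\infty+p^{-1})|B_{i-1}|_p\le |b_i|_p|B_{i-1}|_p=|B_i|_p. \]
The numerators are handled the same way. If $b_0=0$, then $A_0=0$, $A_1=1$, and \eqref{eq:vA0} gives $|A_i|_p=|b_i|_p|A_{i-1}|_p$ for $i\ge2$. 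If $|b_0|_p\ge p$, the corresponding formula holds from $i=0$ on. In both cases the same induction goes through. This is where the hypothesis $|b_0|_p\ne 1$ enters: it guarantees the multiplicativity of $|A_i|_p$ from the start.

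The only delicate point is the inequality $|b|_\infty+1/p\le|b|_p$ in the boundary case $r=-1$ for Ruban digits, which holds with equality. The rest is bookkeeping with the ultrametric inequality.
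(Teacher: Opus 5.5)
Your proof is correct, but it is not what the paper does. The paper gives no proof of this proposition and only refers to the cited works. You instead derive all three items directly from the complete-quotient identity $\alpha = (\alpha_{i+1}A_i + A_{i-1})/(\alpha_{i+1}B_i + B_{i-1})$, the determinant identity and \eqref{eq:vB}.

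Your route buys several things the citation does not. It treats the Ruban and Browkin algorithms uniformly, isolating the only algorithm-dependent input in the digit estimate $|b|_\infty + p^{-1} \le |b|_p$ for $b = s(\alpha_i)$ with $v_p(b)\le -1$. It shows exactly why the hypothesis $|b_0|_p \neq 1$ is needed for the numerators: it forces $b_0=0$ or $|b_0|_p\ge p$, which makes $|A_i|_p$ multiplicative from the start. It also yields sharper statements: the extra factor $1/p$ in (1) and (2), and the exact value $|\alpha - A_i/B_i|_p = 1/(|B_i|_p|B_{i+1}|_p)$. The paper states that identity without justification in the proof of Theorem~\ref{thm:loglog}, so your argument supplies it. The citation route buys only brevity.

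Two cosmetic points. At $i=0$, the bound $|B_0|_p \ge p|B_{-1}|_p$ is not an instance of \eqref{eq:vB}; it holds because $B_{-1}=0$. In that case $|\alpha_1 B_0 + B_{-1}|_p = |\alpha_1|_p$ directly, so your strict inequality $p^2|B_{-1}|_p > |B_{-1}|_p$ (which is false when $B_{-1}=0$) is not needed there. In (2), you should also allow $\alpha$ itself to terminate at step $i$; this is handled symmetrically via (1).
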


For further information and references about $p$--adic continued fractions see \cite{Romeo}.

\section{Main results} \label{sec:main}

\subsection{Palindromic \texorpdfstring{$p$}{p}--adic continued fractions}
In this section, we prove that a $p$--adic continued fraction starting with arbitrarily long palindromes is transcendental or quadratic irrational. Unlike previous results on this topic in \cite{CT, LMS}, here we are able to remove any constraint on the $p$--adic norm of partial quotients, requiring only that the Archimedean norms of the numerators 
and denominators of the $n$th convergents grow strictly slower 
than $p^{n/4}$ for sufficiently large $n$.
\begin{Lemma}\label{lemma:a2}
Consider $\alpha \in \mathbb{Q}_{p}$ with $p$--adic continued fraction expansion 
\(\alpha=\left[0, b_{1}, b_{2}, \ldots\right],\)
where $\left(b_{n}\right)_{n \geq 1}$ is a sequence beginning with arbitrarily long palindromes. For infinitely many $n$, we have
\begin{align*}
\left|\alpha^{2}-\frac{A_{n-1}}{B_{n}}\right|_{p}
<\frac{\left|b_{1}\right|_{p}}{\left|B_{n}\right|_{p}^{2}}.
\end{align*}
\end{Lemma}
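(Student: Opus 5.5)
Take $n$ with $b_1\cdots b_n$ a palindrome; there are infinitely many such $n$. The plan is to compare $\alpha^2$ with the product of $\alpha$ and the reversed continued fraction. First I will use the matrix formulation
\[
\begin{pmatrix} b_1 & 1\\ 1 & 0\end{pmatrix}\cdots\begin{pmatrix} b_n & 1\\ 1 & 0\end{pmatrix}
=\begin{pmatrix} B_n & B_{n-1}\\ A_n & A_{n-1}\end{pmatrix}
\]
(the case $b_0=0$, after absorbing the leading $0$). Palindromicity makes this product symmetric, since its transpose is the product of the same matrices in reverse order. Hence $A_n=B_{n-1}$. Consequently
\[
\frac{A_n}{B_n}\cdot\frac{A_{n-1}}{B_{n-1}}=\frac{A_{n-1}}{B_n},
\]
so $A_{n-1}/B_n$ is the product of two consecutive convergents.

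Next I write
\[
\alpha^2-\frac{A_{n-1}}{B_n}=\alpha\Bigl(\alpha-\frac{A_{n-1}}{B_{n-1}}\Bigr)+\frac{A_{n-1}}{B_{n-1}}\Bigl(\alpha-\frac{A_n}{B_n}\Bigr)
\]
and apply the ultrametric inequality. For the norms, $|\alpha|_p=1/|b_1|_p$ because $\alpha=1/\alpha_1$ with $|\alpha_1|_p=|b_1|_p$. Also $|A_{n-1}/B_{n-1}|_p=|\alpha|_p=1/|b_1|_p$ for $n\ge 2$, which follows from \eqref{eq:vA0} and \eqref{eq:vB}: $|A_{n-1}|_p/|B_{n-1}|_p=1/|b_1|_p$. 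By Proposition~\ref{prop:several}(1),
\[
\Bigl|\alpha-\frac{A_n}{B_n}\Bigr|_p<\frac{1}{|B_n|_p^2},
\]
so the second term is bounded by $1/(|b_1|_p|B_n|_p^2)$, which is within the target.

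The first term is the main obstacle. Proposition~\ref{prop:several}(1) only gives $|\alpha-A_{n-1}/B_{n-1}|_p<1/|B_{n-1}|_p^2$, and this is larger by the factor $|b_n|_p^2$. The sharper identity is
\[
\alpha-\frac{A_{n-1}}{B_{n-1}}=\frac{(-1)^{n-1}}{B_{n-1}(\alpha_n B_{n-1}+B_{n-2})}.
\]
Its norm is
\[
\frac{1}{|B_{n-1}|_p\,|B_n|_p}=\frac{|b_n|_p}{|B_n|_p^2},
\]
using $|\alpha_nB_{n-1}+B_{n-2}|_p=|\alpha_n|_p|B_{n-1}|_p=|B_n|_p$. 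Multiplying by $|\alpha|_p=1/|b_1|_p$ gives $|b_n|_p/(|b_1|_p|B_n|_p^2)$. Since $b_n=b_1$ by palindromicity, this equals $1/|B_n|_p^2$, which is exactly $|b_1|_p$ times smaller than the target bound. So the first term fits comfortably, and no strict inequality is needed from it.

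Combining the two terms, the maximum is at most $1/|B_n|_p^2$, with strict inequality in the second term. The only care needed is the ultrametric bound
\[
\max\Bigl(\frac{1}{|B_n|_p^2},\ \frac{1}{|b_1|_p|B_n|_p^2}\Bigr)=\frac{1}{|B_n|_p^2}<\frac{|b_1|_p}{|B_n|_p^2},
\]
which holds since $|b_1|_p\ge p>1$. This yields the stated inequality for every such $n$, hence for infinitely many $n$.
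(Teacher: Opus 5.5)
Your proof is correct and is essentially the paper's argument. Both use symmetry of the matrix product to get $A_n=B_{n-1}$, write $A_{n-1}/B_n$ as a product of consecutive convergents, split ultrametrically, and rely on $b_n=b_1$ to make the dominant term $|b_n|_p/(|b_1|_p|B_n|_p^2)$ small enough. The only difference is the grouping. The paper writes the difference as $\bigl(\alpha+\frac{A_{n-1}}{B_{n-1}}\bigr)\bigl(\alpha-\frac{A_n}{B_n}\bigr)$ minus $\alpha\bigl(\frac{A_{n-1}}{B_{n-1}}-\frac{A_n}{B_n}\bigr)$ and evaluates the second piece by the determinant identity, where you use the exact error formula for the $(n-1)$st convergent. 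You state the use of $b_n=b_1$ explicitly, whereas the paper leaves it tacit.
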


\begin{proof}
By classical results on continued fractions, we have 
$$
M_{n}:=\left(\begin{array}{cc}
B_{n} & B_{n-1} \\
A_{n} & A_{n-1}
\end{array}\right)=\left(\begin{array}{cc}
b_{1} & 1 \\
1 & 0
\end{array}\right)\left(\begin{array}{cc}
b_{2} & 1 \\
1 & 0
\end{array}\right) \ldots\left(\begin{array}{cc}
b_{n} & 1 \\
1 & 0
\end{array}\right) .
$$
The matrix $M_{n}$ is symmetrical if and only if $\left(b_{1}, \ldots, b_{n}\right)$ is palindromic, thus for infinitely many $n$ we have $A_n = B_{n-1}$. 
Recalling that $B_n A_{n-1} - A_n B_{n-1} = (-1)^n$, we obtain
\begin{align*}
\left|\alpha^{2}-\frac{A_{n-1}}{B_{n}}\right|_{p} & =\left|\alpha^{2}-\frac{A_{n-1}}{B_{n-1}} \frac{A_{n}}{B_{n}}\right|_{p}=\left|\left(\alpha+\frac{A_{n-1}}{B_{n-1}}\right)\left(\alpha-\frac{A_{n}}{B_{n}}\right)-\frac{(-1)^{n} \alpha}{B_{n} B_{n-1}}\right|_{p} \\
& \leq \max \left\{\left|\alpha+\frac{A_{n-1}}{B_{n-1}}\right|_{p}\left|\alpha-\frac{A_{n}}{B_{n}}\right|_{p} , \frac{|\alpha|_{p}}{\left|B_{n} B_{n-1}\right|_{p}}\right\}.
\end{align*}
Indeed, since $|\alpha_1|_p = |b_1|_p$, we have 
\begin{align*}
\left|\alpha \right|_p = \left| \frac{1}{\alpha_1}\right|_p = \frac{1}{\left|\alpha_1\right|_p} = \frac{1}{\left|b_1\right|_p} = \left| \frac{A_{n-1}}{B_{n-1}}\right|_p,  
\end{align*}
and
\begin{align*}
\left|\alpha+\frac{A_{n-1}}{B_{n-1}}\right|_{p}\left|\alpha-\frac{A_{n}}{B_{n}}\right|_{p} &< \left|\alpha+\frac{A_{n-1}}{B_{n-1}}\right|_{p} \cdot \frac{1}{|B_n|_p^2}\\
&\le \max \left\{ |\alpha|_p, \left|\frac{A_{n-1}}{B_{n-1}}\right|_{p} \right\} \cdot \frac{1}{|B_n|_p^2}\\
&= |\alpha|_p \cdot \frac{1}{|B_n|_p^2}.
\end{align*}
Consequently,
\begin{align*}
\left|\alpha^{2}-\frac{A_{n-1}}{B_{n}}\right|_{p}
& \leq \max \left\{\left|\frac{\alpha}{B_{n}^{2}}\right|_{p} , \frac{|\alpha|_{p}}{\left|B_{n} B_{n-1}\right|_{p}}\right\}<\frac{\left|b_{1}\right|_{p}}{\left|B_{n}\right|_{p}^{2}}.
\end{align*}

\end{proof}

\begin{Theorem}
Consider $\alpha \in \mathbb{Q}_{p}$ with $p$--adic continued fraction expansion 
\(\alpha=\left[0, b_{1}, b_{2}, \ldots\right],\)
where $\left(b_{n}\right)_{n \geq 1}$ is a sequence beginning with arbitrarily long palindromes. If $\max(|A_n|_\infty^{1/n}, |B_n|_\infty^{1/n}) < p^{1/4}$ for all $n\gg0$, then $\alpha$ is either transcendental or quadratic irrational.    
\end{Theorem}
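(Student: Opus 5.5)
Assume $\alpha$ is algebraic but not quadratic irrational (it is irrational, since the expansion is infinite). I aim for a contradiction with Schlickewei's $p$--adic Subspace Theorem (Theorem \ref{thm:psubspace}), following the strategy of Adamczewski--Bugeaud in the real case. I take $n$ in the infinite set of lengths of palindromic prefixes, so $A_n=B_{n-1}$. I apply the theorem with $n=3$ variables to the vectors
\[\mathbf{x}_n=(B_n,\,B_{n-1},\,A_{n-1})\in\mathbb{Z}[p^{-1}]^3.\]
The partial quotients are in $\mathbb{Z}[p^{-1}]$, so $A_i,B_i\in\mathbb{Z}[p^{-1}]$. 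At the infinite place I take the three coordinate forms $X_1,X_2,X_3$. At the place $p$ I take the independent forms
\[L_1=\alpha^2X_1-X_3,\quad L_2=\alpha X_1-X_2,\quad L_3=X_1.\]
Here $L_2(\mathbf{x}_n)=\alpha B_n-A_n$, using $A_n=B_{n-1}$.

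The $p$--adic factor is estimated as follows.
\begin{itemize}
\item By Lemma \ref{lemma:a2}, $|L_1(\mathbf{x}_n)|_p<|b_1|_p|B_n|_p^{-1}$.
\item By Proposition \ref{prop:several}(1), $|L_2(\mathbf{x}_n)|_p<|B_n|_p^{-1}$.
\item Trivially, $|L_3(\mathbf{x}_n)|_p=|B_n|_p$.
\end{itemize}
Hence the product of the three $p$--adic factors is $\ll|B_n|_p^{-1}$. Since $|b_j|_p\ge p$, formula \eqref{eq:vB} gives $|B_n|_p\ge p^n$.

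At the infinite place, the product of the three coordinates is at most $\max(|A_n|_\infty,|B_n|_\infty)^3$. By hypothesis this is $\le p^{3n(1/4-\delta)}$ for some fixed $\delta>0$ and $n\gg0$. The whole double product is therefore $\ll p^{3n/4-3n\delta}\,|B_n|_p^{-1}$.

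The right-hand side of the Subspace Theorem is $H_\infty^{-\epsilon}H_p^{-\epsilon}$, with $H_\infty\le p^{n/4}$ and $H_p=\max|x_i|_p$. Proposition \ref{prop:several}(3) relates the $p$--adic sizes to $|B_n|_p$. I expect $H_p$ to be of order $|B_n|_p$, up to the factor $|b_1|_p$ coming from $A_{n-1}$ by \eqref{eq:vA0}. Then I need
\[p^{3n/4-3n\delta}|B_n|_p^{-1}\le p^{-\epsilon n/4}|B_n|_p^{-\epsilon}\cdot(\text{const}).\]
Since $|B_n|_p\ge p^n$, this holds for small $\epsilon$ with the margin $3n\delta$ to spare. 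The margin must also absorb $|B_n|_p^{\epsilon}$; I handle this by writing $|B_n|_p^{-1+\epsilon}\le p^{-n(1-\epsilon)}$.

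This is the main obstacle: the exponent bookkeeping must come out exactly with threshold $p^{1/4}$. If the three-variable choice does not give it, I would try four variables $(B_n,B_{n-1},A_n,A_{n-1})$ with $p$--adic forms built from $\alpha$, $\alpha^2$ and the relation $A_n=B_{n-1}$, and redo the exponent count. The rest of the argument is routine.

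The Subspace Theorem then says that infinitely many $\mathbf{x}_n$ lie in one proper subspace. So there is a fixed nonzero $(z_1,z_2,z_3)\in\mathbb{Q}^3$ with
\[z_1B_n+z_2B_{n-1}+z_3A_{n-1}=0\]
for infinitely many $n$. Dividing by $B_n$ and letting $n\to\infty$ along these indices gives $B_{n-1}/B_n=A_n/B_n\to\alpha$ $p$--adically and $A_{n-1}/B_n\to\alpha^2$ by Lemma \ref{lemma:a2}. Hence $z_1+z_2\alpha+z_3\alpha^2=0$.

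Since $\alpha$ is neither rational nor quadratic, this forces $z_1=z_2=z_3=0$, a contradiction. So $\alpha$ is transcendental or quadratic irrational.
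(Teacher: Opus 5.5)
Your proposal is the paper's proof. It uses the same vectors up to order (since $B_{n-1}=A_n$), the coordinate forms at $\infty$, and the identical passage to $z_1+z_2\alpha+z_3\alpha^2=0$; the $p$-adic forms differ only in that you use $\alpha^2X_1-X_3$, bounded via Lemma~\ref{lemma:a2}, where the paper bounds $A_{n-1}-\alpha A_n$ directly.

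The exponent count closes exactly as in the paper: $H_p=|B_n|_p$, and $|b_1|_p\,p^{(3+\epsilon)n/4-(1-\epsilon)n}\to0$ for $\epsilon<1/5$. So neither the four-variable fallback nor a uniform margin $\delta$ is needed. The pointwise hypothesis does not give such a margin anyway. You should also bound $|A_{n-1}|_\infty$ by the hypothesis at index $n-1$, not by $\max(|A_n|_\infty,|B_n|_\infty)$, since that comparison can fail.

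One correction: for Ruban's algorithm an infinite expansion does not imply irrationality. A rational $\alpha=a/b$ is nevertheless excluded by the growth hypothesis. Indeed $0\neq aB_n-bA_n\in\mathbb{Z}[p^{-1}]$ gives $|B_n|_p\ll\max(|A_n|_\infty,|B_n|_\infty)<p^{n/4}$, contradicting $|B_n|_p\ge p^n$.
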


\begin{proof}
Suppose by contradiction that $\alpha$ is algebraic of degree $\ge 3$.
The following linear forms
$$
L_{1, p}(X, Y, Z)=\alpha Y - X \quad L_{2, p}(X, Y, Z) = Z - \alpha X, \quad L_{3, p}(X, Y, Z) = Y
$$
and
$$
L_{1, \infty}(X, Y, Z) = X, \quad L_{2, \infty}(X, Y, Z)= Y, \quad L_{3, \infty}(X, Y, Z) = Z.
$$
are linearly independent with algebraic coefficients.
Let $0 < \epsilon <1/5$ and $S=\{\infty, p\}$. We want to prove that, for infinitely many $n$, $(A_n, B_n, A_{n-1}) \in \mathbb{Z}\left[p^{-1}\right]^{3} \setminus \{0\}$ satisfies
\begin{align} \label{eq:Lprod}
\prod_{v \in S} \prod_{i = 1}^{3}\left|L_{i, v}(A_n, B_n, A_{n-1})\right|_{v}<\max \left\{\left|A_n\right|_{\infty}, \left|B_n\right|_{\infty}, \left|A_{n-1}\right|_{\infty}\right\}^{-\epsilon} \max \left\{\left|A_n\right|_{p}, \left|B_n\right|_{p},\left|A_{n-1}\right|_{p}\right\}^{-\epsilon}.
\end{align}
For infinitely many $n$, the sequence $(b_1,\ldots,b_n)$ is palindromic and consequently  $A_n = B_{n-1}$. Moreover, from \eqref{eq:vB} and \eqref{eq:vA0}, we also have $|A_n|_p < |B_n|_p$.

It is well--known that
$$
\left|\alpha-\frac{A_{n}}{B_{n}}\right|_{p}<\frac{1}{\left|B_{n}\right|_{p}^{2}},
$$
from which
\begin{align*}
\left| B_n \alpha - A_n\right|_p < |B_n|_p^{-1} \le p^{-n}.
\end{align*}
Moreover, using $A_n = B_{n-1}$ and $B_n A_{n-1} - A_n B_{n-1} = (-1)^n$, we get
\begin{align*}
A_{n-1} = \frac{(-1)^n + B_{n-1}^2}{B_n},   
\end{align*}
and
\begin{align*}
\left| A_{n-1} - \alpha A_n\right|_p &= \left|
\frac{(-1)^n + B_{n-1}^2}{B_n} - \alpha A_n\right|_p =
\left|
B_{n-1}\left( \frac{B_{n-1}}{B_n} - \alpha\right) + \frac{(-1)^n}{B_n}\right|_p \\ 
&\le \max\left( \left|
B_{n-1}\left( \frac{B_{n-1}}{B_n} - \alpha\right)\right|_p, \frac{1}{\left|B_n\right|_p} \right)\le \left|B_n\right|_p^{-1} \le p^{-n}. 
\end{align*}
Thus, we have
\begin{align*}
& |A_n B_n A_{n-1}|
  |B_n \alpha - A_n|_p
  |A_{n-1} - \alpha A_n|_p
  |B_n|_p
  \max\left\{ |A_n|, |B_n|, |A_{n-1}| \right\}^{\epsilon}
  \max\left\{ |A_n|_p, |B_n|_p, |A_{n-1}|_p \right\}^{\epsilon} \\
&< |A_n B_n A_{n-1}|
   |B_n|_p^{-1} |B_n|_p^{-1}
   |B_n|_p
   \max\left\{ |A_n|, |B_n|, |A_{n-1}| \right\}^{\epsilon}
   |B_n|_p^{\epsilon}
   < \frac{p^{(3+\epsilon)n/4}}{p^{(1-\epsilon)n}}
   < 1
\end{align*}
for sufficiently large $n$ and considering that $0 < \epsilon < 1/5$.
By Theorem \ref{thm:psubspace}, all the points $(A_n, B_n, A_{n-1})$ satisfying \eqref{eq:Lprod} lie in a proper rational subspace of $\mathbb{Q}^3$, that is, there exist $Q_1, Q_2, Q_3 \in \mathbb{Q}$ such that
\begin{align*}
Q_1 A_n + Q_2 B_n + Q_3 A_{n-1} = 0.
\end{align*}
Hence,
\begin{align*}
Q_1 \frac{A_n}{B_n} + Q_2 + Q_3 \frac{A_{n-1}}{B_n} = 0.
\end{align*}
By Lemma \ref{lemma:a2}, we obtain 
\begin{align*}
Q_1 \alpha + Q_2 + Q_3 \alpha^2 = 0,
\end{align*}
contradicting $\alpha$ algebraic of degree $\ge 3$.
Thus, $\alpha$ is either transcendental or a quadratic irrational.
\end{proof}

\subsection{A quantitative version of \texorpdfstring{$p$}{p}--adic Roth's theorem}

In Theorem \ref{thm:Ridout}, Ridout assumes that the polynomial $f(X)$ has a real root, which is used to define the Archimedean absolute value in the inequality. However, if we consider a $p$-adic algebraic number $\alpha$, its minimal polynomial over $\mathbb{Q}$ always has at least one complex root. 
Taking the usual complex absolute value as such a root in place of the real root allows the same argument to go through. Thus, in the followong we only assume $\alpha$ a $p$--adic algebraic number.

The proof by Ridout is very similar to the proof of the classical Roth's theorem and consequently we can derive a quantitative version of Theorem \ref{thm:Ridout}, similarly to the quantitative version of Roth's theorem obtained in \cite{DR}. 

The core idea in Ridout's proof is to consider $m \in \mathbb{Z}$ sufficiently large and $\delta \in \mathbb R$ sufficiently small such that
\begin{equation} \label{eq:ridout}
\begin{cases}
 0<\delta<m^{-1}, 
   & \text{(see \cite{Ridout}, Eq. (8))}\\[7pt]
 \cfrac{2m(1 + 5 \delta)}{m - 4 (1+3\delta)nm^{1/2} - 2\eta} < 2 + \epsilon, 
   & \text{(see \cite{Ridout}, pag. 6)}\\[12pt]
 2 \eta + 4(1+3 \delta) n m^{1 / 2}< m,
   & \text{(see \cite{Ridout}, Eq. (9))}
\end{cases}
\end{equation}

where, \(\eta = 10^m \delta^{(1 / 2)^{m}}\). Then, Ridout proves that for every $\epsilon > 0$, the inequality
\begin{align}\label{eq:Rid}
\min (1, \left|\alpha - \frac{A}{B}\right|_\infty) \cdot   
\min (1, |A - B \alpha|_p) < \frac{1}{\max(|A|_\infty, |B|_\infty)^{2+\epsilon}},
\end{align}
can not have $m$ solutions $\frac{A_1}{B_1}, \dots, \frac{A_m}{B_m}$ with $A_i, B_i \in \mathbb{Z}$, $B_i >0$ and $\gcd(A_i,B_i)=1$, that satisfy simultaneously 
\begin{equation} \label{eq:ridout2}
\begin{cases}
\log |B_1|_\infty > \hat{C}m \delta^{-2}, &  \text{(see \cite{Ridout}, Eq. (11))}\\[7pt]
\cfrac{\log |B_j|_\infty}{\log \max(|A_{j-1}|_\infty, |B_{j-1}|_\infty)} > \cfrac{2}{\delta}, \qquad j= 2, \dots, m & \text{(see \cite{Ridout}, Eq. (26))}.
\end{cases}
\end{equation}
where
\(
\hat{C}=2 + 2 \log (2+\bar{A}),
\)
and $\bar{A}=\max \left(1,\left|a_{1}\right|, \ldots,\left|a_{n}\right|\right)$, with $f(x) = x^n + a_1x^{n-1} + \dots + a_n$ the minimal polynomial of $\alpha$.

We can observe that if \eqref{eq:Rid} can not have $m$ solutions, also
\begin{align}\label{eq:Rid2}
\left|\alpha - \frac{A}{B}\right|_p < \frac{1}{\max(|A|_\infty, |B|_\infty)^{2+\epsilon}}
\end{align}
can not have $m$ solutions, because all the solutions of \eqref{eq:Rid2} are solutions of \eqref{eq:Rid}, since
\begin{align*}
|A - B \alpha|_p \le \left|\alpha - \frac{A}{B}\right|_p.
\end{align*}

In the following we focus on the case
$|B|_\infty = \max(|A|_\infty,|B|_\infty)$. 

\begin{Theorem}\label{THm p adic Quantitative Ridout}
Let $\alpha \in \mathbb{Q}_p$ be algebraic of degree $n \ge 2$. 
Let 
\begin{align*}
f(x) = x^n + a_1x^{n-1} + \dots + a_n 
\end{align*} 
be the minimal polynomial of $\alpha$, where $a_{1}, \ldots, a_{n}$ are rational integers. Let
\begin{equation*}
\hat{C}=2 + 2 \log (2+\bar{A}),
\end{equation*}
where $\bar{A}=\max \left(1,\left|a_{1}\right|, \ldots,\left|a_{n}\right|\right)$. 
Then, for every $0 < \epsilon \leq 1/3$, the number of solutions of
\begin{align}\label{p adic quantitative1}
\left|\alpha - \frac{A}{B}\right|_p < \frac{1}{ 2|B|_\infty^{2+\epsilon}},
\end{align}
with $A,B \in \mathbb{Z}$, $B>0$, $\gcd(A,B)=1$ and $|B|_\infty \ge |A|_\infty$, 
is less than 
\begin{align*}
2 \epsilon^{-1} \log \hat{C} +\exp(Cn^2\epsilon^{-2})  
\end{align*} 
for some constant $C$.
\end{Theorem}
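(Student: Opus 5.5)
We follow the Davenport--Roth strategy for the quantitative Roth theorem, using Ridout's framework \eqref{eq:ridout}--\eqref{eq:ridout2} as a black box. First we fix the parameters: for the given $0<\epsilon\le 1/3$ we choose $m$ of order $n^2\epsilon^{-2}$, say $m = \lceil c\, n^2 \epsilon^{-2}\rceil$ with an absolute constant $c$, and $\delta$ so small that $\eta = 10^m\delta^{(1/2)^m}$ is at most a small multiple of $\epsilon m$. This requires $\delta$ to be roughly $10^{-m2^m}$, so $\log(1/\delta)\approx m 2^m \log 10$. We then check the three conditions of \eqref{eq:ridout}. The third one and $\delta<m^{-1}$ are immediate. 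For the second, we note that $4(1+3\delta)nm^{1/2}\le \epsilon m/10$ once $m\ge c n^2\epsilon^{-2}$, and $2\eta\le \epsilon m/10$. The left-hand side is then at most $2(1+5\delta)/(1-\epsilon/5)<2+\epsilon$ for $\epsilon\le 1/3$. By Ridout's argument, no $m$ solutions of \eqref{eq:Rid2} can satisfy \eqref{eq:ridout2}. Every solution of \eqref{p adic quantitative1} with $|B|_\infty\ge|A|_\infty$ is a solution of \eqref{eq:Rid2}, because $\max(|A|_\infty,|B|_\infty)=|B|_\infty$.

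Next we count the solutions, ordered by increasing $B$, in three ranges.

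\emph{Small solutions, $\log B\le \hat C m\delta^{-2}$.} Here we use a gap principle: if $A/B$ and $A'/B'$ are distinct solutions with $B\le B'$, then
\[
p^{-?}\le |AB'-A'B|_p\,\cdot(\text{stuff}) .
\]
More precisely, $AB'-A'B$ is a nonzero integer, so $|AB'-A'B|_p\ge 1/|AB'-A'B|_\infty\ge 1/(2BB')$. On the other hand,
\[
|AB'-A'B|_p\le \max(|\alpha-A/B|_p,|\alpha-A'/B'|_p) < \tfrac{1}{2}B^{-2-\epsilon}.
\]
Together these give $B'> B^{1+\epsilon}$. The factor $1/2$ in \eqref{p adic quantitative1} is exactly what makes this inequality strict. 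Iterating, the $k$-th solution satisfies $\log B_k\ge (1+\epsilon)^{k-1}\log 2$, or $\log B_k\ge(1+\epsilon)^{k-2}\log B_2$, handling $B=1$ separately. Hence the number of solutions with $\log B\le \hat C m\delta^{-2}$ is at most about $\epsilon^{-1}\log(\hat C m \delta^{-2})+O(1)$. Splitting the logarithm, this is at most $2\epsilon^{-1}\log\hat C$ plus $\epsilon^{-1}\log(m\delta^{-2})$. The latter is $O(\epsilon^{-1} m2^m)$, which is $\exp(O(n^2\epsilon^{-2}))$.

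\emph{Large solutions.} Suppose the large solutions satisfying $\log B>\hat C m\delta^{-2}$ numbered at least $M$. Applying the same gap principle repeatedly, $\log B$ is multiplied by at least $1+\epsilon$ between consecutive solutions, so a block of $t=\lceil \log(4/\delta)/\log(1+\epsilon)\rceil\approx \epsilon^{-1}\log(1/\delta)$ consecutive solutions raises $\log B$ by a factor exceeding $2/\delta$ (using $\log\max(|A|,|B|)=\log B$). Taking every $t$-th large solution would therefore give $m$ solutions satisfying \eqref{eq:ridout2}, which contradicts Ridout. Hence there are at most $mt\approx m\epsilon^{-1}m2^m$ large solutions, again $\exp(Cn^2\epsilon^{-2})$. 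Summing the two counts, and absorbing all the $\exp(O(n^2\epsilon^{-2}))$ terms into a single $\exp(Cn^2\epsilon^{-2})$, gives the bound.

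The main obstacle is checking that Ridout's constants in \eqref{eq:ridout2}, notably the requirement $\log B_1>\hat C m\delta^{-2}$, really depend only on $m,\delta,\hat C$ and not on $\epsilon$ or $\alpha$ in some hidden way. We also need to verify that the choice $\delta\approx 10^{-m2^m}$, which is forced by $\eta$, still yields the total size $\exp(Cn^2\epsilon^{-2})$ rather than a double exponential. This is where we will be careful: the count depends on $\log(1/\delta)\sim m2^m$, and $2^m$ with $m\sim n^2\epsilon^{-2}$ is $\exp(O(n^2\epsilon^{-2}))$, so the estimate survives.
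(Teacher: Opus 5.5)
Your proposal is correct and follows essentially the paper's argument. It uses the same parameters $m\asymp n^2\epsilon^{-2}$ and $\delta\approx 10^{-m2^m}$ (the paper fixes $\eta=1$ exactly), the same gap principle $B'>B^{1+\epsilon}$ coming from the factor $\tfrac12$, and the same extraction of every $l$-th solution beyond the threshold $\log B>\hat C m\delta^{-2}$ to contradict Ridout's conditions; your small/large split is just the paper's count $k+(m-1)l$ written out, and the stray placeholder display with $p^{-?}$ should simply be deleted.
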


\begin{proof}
Consider $m=\left\lfloor 100 n^{2} \epsilon^{-2}\right\rfloor+1$ and $\delta$ such that $\eta = 1$,
where $\eta= 10^m \delta^{(1 / 2)^{m}}$ as above.
First of all, we prove that these choices for $m$ and $\delta$ satisfy equations \eqref{eq:ridout}. 

Since $\eta = 1$, we have $\delta = 10^{- m \cdot 2^m} > 0$ and $0 < \delta < m^{-1}$ is immediately satisfied.

Now, we verify the second equation in \eqref{eq:ridout}. Since 
$\delta = 10^{- m \cdot 2^m}$ and 
$m=\left\lfloor 100 n^{2} \epsilon^{-2}\right\rfloor+1$ with $n \ge 2$ and
$0 < \epsilon \le 1/3$, the smallest value of $m$ is $3601$, and thus $\delta$ is negligible. Hence, we show that
\begin{align*}
\frac{2m}{m - 4 nm^\frac{1}{2} - 2} < 2 + \epsilon,  
\end{align*}
which is equivalent to
\begin{align*}
(2 + \epsilon) (4n \sqrt{m}) + 2 \epsilon + 4 < \epsilon m.  
\end{align*}
Note that
\begin{align*}
(2 + \epsilon) (4n \sqrt{m}) + 2 \epsilon + 4 < 
(2 + \epsilon) (42n^2 \epsilon^{-1}) + 2\epsilon + 4,
\end{align*}
since $m \le 100 n^{2} \epsilon^{-2} +1$ and thus
$\sqrt{m} \le \sqrt{100 n^{2} \epsilon^{-2} +1} \le \sqrt{100 n^{2} \epsilon^{-2}} +1 \le 10.5 n \epsilon^{-1}$.
Using $\epsilon \le 1/3$, we further obtain
\begin{align*}
(2 + \epsilon) (42n^2 \epsilon^{-1}) + 2\epsilon + 4 \le
\frac{294}{3} n^2 \epsilon^{-1} + \frac{14}{3}
< \frac{297}{3} n^2 \epsilon^{-1},
\end{align*}
Since $100 n^2 \epsilon^{-2} \le m$, we finally have
\begin{align*}
\frac{297}{3} n^2 \epsilon^{-1} \le
100 n^2 \epsilon^{-2} \epsilon = \epsilon m.
\end{align*}

Now, we focus on the third equation in \eqref{eq:ridout}. We have
\begin{align*}
2 + 4 (1+3 \delta) n m^{1 / 2}< 2 + 4 (1+3 m^{-1}) n m^{1 / 2} =2 + 4 n m^{1/2} + 12 n m^{-1/2},     
\end{align*}
which is smaller than $m$ if $m \ge 20n^2$. Considering that
$m=\left\lfloor 100 n^{2} \epsilon^{-2}\right\rfloor+1$ with
$0 < \epsilon \le 1/3$, then we are in the case where $m \ge 20n^2$.

Thus, $m$ and $\delta$ satisfy \eqref{eq:ridout}.

Next, consider $\frac{A_{1}}{B_{1}}, \cdots, \frac{A_{m}}{B_{m}}$ with $A_{i}, B_{i} \in \mathbb{Z}$, $\gcd\left(A_{i}, B_{i}\right)=1$ and $0<\left|B_{1}\right|_{\infty}<\left|B_{2}\right|_{\infty}<\ldots$ solutions of \eqref{p adic quantitative1}, that is
$$
\left|\alpha-\frac{A_i}{B_i}\right|_{p}<\frac{1}{2|B_i|_{\infty}^{2+\epsilon}},
$$
where we are assuming $|B_i|_\infty \ge |A_i|_\infty$.
Then
\begin{align*}
\frac{1}{2\left|B_{i} B_{i+1}\right|_{\infty}} &\le 
\frac{1}{\left|B_{i} A_{i+1}\right|_{\infty}+\left|A_{i} B_{i+1}\right|_{\infty}}
\le \frac{1}{\left|B_{i} A_{i+1}-A_{i} B_{i+1}\right|_{\infty}}\\
&\le
\left|B_{i} A_{i+1}-A_{i} B_{i+1}\right|_{p}
\le \left|\frac{B_{i} A_{i+1}-A_{i} B_{i+1}}{B_{i} B_{i+1}}\right|_{p},
\end{align*}
where the last inequality follows from
\[
|B_i B_{i+1}|_p = |B_i|_p\,|B_{i+1}|_p \le 1,
\]
since $B_i,B_{i+1}\in\mathbb{Z}$.
In addition, we have
\begin{align*}
\left|\frac{B_{i} A_{i+1}-A_{i} B_{i+1}}{B_{i} B_{i+1}}\right|_{p} = 
\left|\frac{A_{i+1}}{B_{i+1}}- \frac{A_{i}}{B_{i}}\right|_{p}
\le \max \left(\left|\alpha-\frac{A_{i}}{B_{i}}\right|_{p},
\left|\alpha-\frac{A_{i+1}}{B_{i+1}}\right|_{p}\right)
<\frac{1}{2|B_i|_{\infty}^{2+\epsilon}},
\end{align*}
since $|B_{i+1}|_\infty > |B_{i}|_\infty$.
It follows that
$$
\left|B_{i+1}\right|_{\infty}>\left|B_{i}\right|_{\infty}^{1+\epsilon}
$$
and
\begin{align} \label{p adic repeat}
\frac{\log \left|B_{i+1}\right|_{\infty}}{\log \left|B_{i}\right|_{\infty}}>1+\epsilon.   
\end{align}
Let $k$ be the smallest integer such that
\begin{align*}
(1 + \epsilon)^{k-2} \log 2 > \hat{C}m \delta^{-2},    
\end{align*}
Then, using \eqref{p adic repeat} repeatedly, we obtain  
\begin{align*}
\log |B_k|_\infty > (1 + \epsilon)^{k-2} \log |B_2|_\infty \ge (1 + \epsilon)^{k-2} \log 2 > \hat{C}m \delta^{-2},    
\end{align*}
since $1 \le |B_1|_\infty < |B_2|_\infty$ implies
$|B_2|_\infty \ge 2$.

Let $l$ be the smallest integer such that
\begin{align*}
(1+ \epsilon)^l > \frac{2}{\delta},   
\end{align*}
then
\begin{align*}
\frac{\log \left|B_{i+l}\right|_{\infty}}{\log \left|B_{i}\right|_{\infty}}
> \frac{2}{\delta} \qquad \text{for} \quad i = 1,2,\dots
\end{align*}
Indeed, by \eqref{p adic repeat},
\begin{align*}
\frac{\log \left|B_{i+l}\right|_{\infty}}{\log \left|B_{i}\right|_{\infty}} =
\frac{\log \left|B_{i+l}\right|_{\infty}}{\log \left|B_{i+l-1}\right|_{\infty}} \cdots \frac{\log \left|B_{i+2}\right|_{\infty}}{\log \left|B_{i+1}\right|_{\infty}} \cdot \frac{\log \left|B_{i+1}\right|_{\infty}}{\log \left|B_{i}\right|_{\infty}}
> (1+\epsilon)^l > \frac{2}{\delta} \qquad \text{for} \quad i = 1,2,\dots
\end{align*}

Thus the number of solutions of \eqref{p adic quantitative1} must be less than $k+(m-1)l$, otherwise
$$
b_1 = B_{k}, \quad b_2= B_{k+l}, \quad \dots, \quad b_m =B_{k+(m-1)l}
$$
are $m$ denominators of solutions of \eqref{p adic quantitative1} that satisfy simultaneously both the equations in \eqref{eq:ridout2} which is not possible as proved by Ridout \cite{Ridout}. 
Indeed, we have
\[
\log |b_1|_\infty = \log |B_k|_\infty > \hat{C} m \delta^{-2},
\]
and, for $j=2,\dots,m$,
\[
\frac{\log |b_j|_\infty}{\log |b_{j-1}|_\infty}
= \frac{\log |B_{k+(j-1)l}|_\infty}{\log |B_{k+(j-2)l}|_\infty}
= \frac{\log |B_{i+l}|_\infty}{\log |B_i|_\infty} > \frac{2}{\delta},
\]
where $i = k + (j-2)l$.

Finally, we estimate $k+(m-1)l$. 
Since $l$ is the smallest integer such that
\begin{align*}
(1+ \epsilon)^l > \frac{2}{\delta},   
\end{align*}
we have
\begin{align*}
\log l + \log(1+ \epsilon) > \log 2 + \log \delta^{-1},   
\end{align*}
and thus $l$ is the smallest integer such that
\begin{align*}
l > \frac{ \log 2 + \log \delta^{-1}}{\log(1+ \epsilon)}.   
\end{align*}
Hence,
\begin{align*}
l \le \frac{ \log 2 + \log \delta^{-1}}{\log(1+ \epsilon)} +1 \le \frac{ \log 2 + m 2^m \log10}{\epsilon/2} +1,   
\end{align*}
and consequently there exists a constant $c_1 > 0$ such that
\begin{align*}
l \le
c_1 \cdot \frac{m 2^m}{\epsilon},   
\end{align*}
where $c_1 = 2+ 2 \log(10)$.
Since $k$ is the smallest integer such that
\begin{align*}
(1 + \epsilon)^{k-2} \log 2 > \hat{C}m \delta^{-2},
\end{align*}
we get
\begin{align*}
(k-2) \log (1+\varepsilon)>\log \hat{C}+\log m+2 \log \delta^{-1}-\log \log 2,  
\end{align*}
and solving for $k$ gives
\begin{align*}
k>\frac{\log \hat{C}+\log m+2 \log \delta^{-1}-\log \log 2}{\log (1+ \epsilon)}+2.    
\end{align*}

Since $k$ is the smallest such integer, we have
\begin{align*}
k &\le \frac{\log \hat{C}+\log m+2 \log \delta^{-1}-\log \log 2}{\log (1+ \epsilon)}+3 \\
&\le
\frac{2\log \hat{C}+ m2^m +4 m \cdot 2^m \log10 + m2^m}{\epsilon} =
2\log \hat{C}\epsilon^{-1} + c_2\frac{m2^m}{\epsilon}
\end{align*}
where $c_2 = 2 + 4 \log10$.
Putting all together we obtain
\begin{align*}
k + (m-1)l &\le 2\log \hat{C}\epsilon^{-1} + c_2\frac{m2^m}{\epsilon} + (m-1) c_1 \frac{m2^m}{\epsilon}.
\end{align*}
Thus, it suffices to show that
\begin{align*}
c_2\frac{m2^m}{\epsilon} + (m-1) c_1 \frac{m2^m}{\epsilon} \le \exp(Cn^2\epsilon^{-2})    
\end{align*}
for some constant $C >0$.
We begin by noting that
\begin{align*}
c_2\frac{m2^m}{\epsilon} + (m-1) c_1 \frac{m2^m}{\epsilon} = \frac{m2^m}{\epsilon} (c_2 + (m-1) c_1) \le m^2 2^m\epsilon^{-1}c_3,  
\end{align*}
since $c_2 + (m-1) c_1 =m c_1 + c_2-c_1 \le c_3m$ for $c_3$ sufficiently large. Rewriting in exponential form gives
\begin{align*}
m2^m\epsilon^{-1}c_3m = \exp(2\log m + m \log2 + \log\epsilon^{-1} + \log c_3). 
\end{align*}
Using $m \le 101n^2\epsilon^{-2}$ and $n^2\epsilon^{-2}>1$, we obtain
\begin{align*}
\log m \le \log 101 + \log(n^2\epsilon^{-2}) \le \log 101 + n^2\epsilon^{-2} \le c_4 n^2\epsilon^{-2},   
\end{align*}
where $c_4>0$ is a sufficiently large constant. Since $\epsilon^{-1}\ge 3 > 1$, we have
\begin{align*}
\log\epsilon^{-1} \le  n^2\epsilon^{-2}. 
\end{align*}
Choose $c_5>0$ such that
\begin{align*}
\log c_3 \le c_5 n^2\epsilon^{-2}.    
\end{align*}
We obtain
\begin{align*}
\exp(2\log m + m \log2 + \log\epsilon^{-1} + \log c_3) &\le
\exp(2c_4n^2\epsilon^{-2} + \log2 \cdot 101 n^2\epsilon^{-2} + n^2\epsilon^{-2} + c_5n^2\epsilon^{-2})\\
&= \exp(n^2\epsilon^{-2}(2c_4 + \log2 \cdot 101 + 1 + c_5)). 
\end{align*}
Hence, after setting $C = 2c_4 + \log2 \cdot 101 + 1 + c_5$, we obtain the desired upper bound
\begin{align*}
k + (m-1)l \le 2\log \hat{C}\epsilon^{-1} + \exp(Cn^2\epsilon^{-2}),
\end{align*}
which completes the proof.
\end{proof}

As a consequence, we obtain the following corollary.

\begin{Corollary}\label{coroquanti}
Let $\alpha \in \mathbb{Q}_p$ be algebraic over $\mathbb{Q}$ of degree $n \ge 2$. 
Let 
\begin{align*}
f(x) = x^n + a_1x^{n-1} + \dots + a_n 
\end{align*} be the minimal polynomial of $\alpha$, where $a_{1}, \ldots, a_{n}$ are rational integers. Let
\begin{equation*}
\hat{C}=2 + 2 \log (2+\bar{A}),
\end{equation*}
where $\bar{A}=\max \left(1,\left|a_{1}\right|, \ldots,\left|a_{n}\right|\right)$.
Then, for every $0<\epsilon\le 2/3$, the number of solutions of 
\begin{align}\label{p adic quantitative2}
\left|\alpha - \frac{A}{B}\right|_p < \frac{1}{ |B|_\infty^{2+\epsilon}},
\end{align}
with $A,B \in \mathbb{Z}$, $B>0$, $\gcd(A,B)=1$ and $|B|_\infty \ge |A|_\infty$, 
is less than 
\begin{align*}
4 \epsilon^{-1} \log \hat{C} +\exp(C_1n^2\epsilon^{-2})  
\end{align*} 
for some constant $C_1>0$.
\end{Corollary}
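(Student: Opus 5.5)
Deduce the Corollary from Theorem \ref{THm p adic Quantitative Ridout}, removing the factor $\tfrac12$ on the right-hand side by lowering the exponent. Set $\epsilon' = \epsilon/2$, so that $0<\epsilon'\le 1/3$ whenever $0<\epsilon\le 2/3$. This is exactly the range in which Theorem \ref{THm p adic Quantitative Ridout} applies.

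Split the solutions of \eqref{p adic quantitative2} into two classes according to the size of $|B|_\infty$.

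\emph{Large denominators.} Suppose $|B|_\infty^{\epsilon/2}\ge 2$, i.e. $|B|_\infty \ge 2^{2/\epsilon}$. Then
\[
\left|\alpha-\frac{A}{B}\right|_p<\frac{1}{|B|_\infty^{2+\epsilon}} = \frac{1}{|B|_\infty^{2+\epsilon'}\,|B|_\infty^{\epsilon/2}} \le \frac{1}{2|B|_\infty^{2+\epsilon'}},
\]
so $A/B$ solves \eqref{p adic quantitative1} with $\epsilon'$ in place of $\epsilon$. The side conditions $B>0$, $\gcd(A,B)=1$ and $|B|_\infty\ge|A|_\infty$ carry over unchanged. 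By Theorem \ref{THm p adic Quantitative Ridout}, the number of such solutions is less than
\[
2\epsilon'^{-1}\log\hat C+\exp(Cn^2\epsilon'^{-2}) = 4\epsilon^{-1}\log\hat C+\exp(4Cn^2\epsilon^{-2}).
\]

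\emph{Small denominators.} The remaining solutions have $1\le B< 2^{2/\epsilon}$, and we bound them crudely. Since $|A|_\infty\le B$, each such $B$ admits at most $2B+1$ choices of $A$. The total number of pairs is therefore at most $\sum_{B<2^{2/\epsilon}}(2B+1)\le 3\cdot 4^{2/\epsilon}=3\exp\bigl((2\log 4)\epsilon^{-1}\bigr)$.

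\emph{Combining the bounds.} Since $n\ge2$ and $\epsilon\le 2/3$, we have $\epsilon^{-1}\le \tfrac{2}{3}\cdot\tfrac14 n^2\epsilon^{-2}\cdot 6$, so $\epsilon^{-1}\le n^2\epsilon^{-2}$ up to a harmless constant. Hence
\[
3\exp\bigl((2\log4)\epsilon^{-1}\bigr)+\exp(4Cn^2\epsilon^{-2})\le \exp(C_1 n^2\epsilon^{-2})
\]
for a suitable absolute constant $C_1$. For instance, we can take $C_1=4C+2\log 4+\log 4$, using $e^a+3e^b\le 4e^{\max(a,b)}$ and $\log 4\le n^2\epsilon^{-2}$. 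This gives the stated bound $4\epsilon^{-1}\log\hat C+\exp(C_1n^2\epsilon^{-2})$.

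\emph{Main difficulty.} The only real issue is the factor $2$ that separates \eqref{p adic quantitative2} from \eqref{p adic quantitative1}, and halving $\epsilon$ disposes of it. The remaining work is checking that the small-denominator count and the constants fit inside $\exp(C_1n^2\epsilon^{-2})$, which is routine given $n\ge 2$ and $\epsilon\le 2/3$.
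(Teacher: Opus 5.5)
Your proposal is correct and follows the paper's proof essentially step for step. It uses the same threshold $|B|_\infty \ge 2^{2/\epsilon}=4^{\epsilon^{-1}}$, the same reduction to Theorem \ref{THm p adic Quantitative Ridout} with $\epsilon/2\in(0,1/3]$ giving $4\epsilon^{-1}\log\hat C+\exp(4Cn^2\epsilon^{-2})$, and the same crude $\exp(O(\epsilon^{-1}))$ count of small-denominator pairs absorbed into $\exp(C_1n^2\epsilon^{-2})$. The only differences are cosmetic choices of constants; your oddly written inequality in the combining step just amounts to $\epsilon^{-1}\le n^2\epsilon^{-2}$, which holds since $\epsilon\le 2/3<n^2$.
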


\begin{proof}
Consider $A/B$ solution of \eqref{p adic quantitative2} such that $|B|_{\infty} \ge 4^{\epsilon^{-1}}$, hence $|B|_{\infty}^{\epsilon/2} \ge 2$, then 
\begin{align*}
\left|\alpha - \frac{A}{B}\right|_p < \frac{1}{ |B|_\infty^{2+\epsilon}}
< \frac{1}{ 2|B|_\infty^{2+\epsilon/2}}.
\end{align*}
Thus, $A/B$ is also a solution of \eqref{p adic quantitative1} with $0<\frac{\epsilon}{2} \le \frac{1}{3}$.
Thus, by Theorem \ref{THm p adic Quantitative Ridout}, the number of solutions of \eqref{p adic quantitative2} such that $|B|_{\infty} \ge 4^{\epsilon^{-1}}$ is less than 
\begin{align*}
4 \epsilon^{-1} \log \hat{C} +\exp(c_1n^2\epsilon^{-2}),  
\end{align*} 
where $c_1 = 4C$ with $C$ constant of Theorem \ref{THm p adic Quantitative Ridout}.

On the other hand, the number of solutions of \eqref{p adic quantitative2} with $|B|_{\infty}<4^{\epsilon^{-1}}$ can be bounded by $\exp \left(c_{2} \epsilon^{-1}\right)$ for some constant $c_2$. Indeed, both $A$ and $B$ are integers and are bounded by $4^{\epsilon^{-1}}$.
Hence there are at most $2\cdot 4^{\epsilon^{-1}}+1$ possible values for $A$ and at most
$4^{\epsilon^{-1}}$ possible values for $B$ ($B>0$). Therefore the total number of pairs $(A,B)$ is bounded by
\[
(2\cdot 4^{\epsilon^{-1}}+1) 4^{\epsilon^{-1}}
\le 4 \cdot 4^{2\epsilon^{-1}},
\]
and consequently the number of distinct values of $A/B$ is bounded by $4 \cdot 4^{2\epsilon^{-1}}
\le \exp(c_2 \epsilon^{-1})$ for 
$c_2 = 4\log4$.

Thus, the number of solutions of \eqref{p adic quantitative2} is less than 
\begin{align*}
4 \epsilon^{-1} \log \hat{C} +\exp(c_1n^2\epsilon^{-2}) + \exp(c_2 \epsilon^{-1}),  
\end{align*} 
where
\begin{align*}
\exp(c_1n^2\epsilon^{-2}) + \exp(c_2 \epsilon^{-1}) &\le \exp(c_1n^2\epsilon^{-2}) + \exp(c_2 n^2\epsilon^{-2}) \le 2\exp(c_3n^2\epsilon^{-2})\\
&\le \exp(\log2 + c_3n^2\epsilon^{-2})
\le \exp(C_1n^2\epsilon^{-2}),
\end{align*}
with $c_3 = \max(c_1,c_2)$ and $C_1 = \log2 + c_3$. Consequently,
the number of solutions of \eqref{p adic quantitative2} is less than 
\begin{align*}
4 \epsilon^{-1} \log \hat{C} + \exp(C_1n^2\epsilon^{-2}).  
\end{align*} 
\end{proof}

\begin{Remark}\label{RemarkbetterBound}
Note that we can bound the number of solutions of \eqref{p adic quantitative2} by
\begin{align*}
\exp(c_1\epsilon^{-2}),    
\end{align*}
where \(c_1 = c_1(\alpha)\) depends on \(\alpha\) through \(n\) and \(\bar A\).
Indeed, since $n \ge 2$ and $0 < \epsilon \le 2/3$,
\begin{align*}
4 \epsilon^{-1} \log \hat{C} \le
4 n^2\epsilon^{-2} \log \hat{C}
\le \exp(4 n^2\epsilon^{-2} \log \hat{C}) = \exp(c_2n^2\epsilon^{-2}),  
\end{align*}
where $c_2 = 4\log \hat{C}$.
Therefore,
\begin{align*}
4 \epsilon^{-1} \log \hat{C} + \exp(Cn^2\epsilon^{-2}) \le \exp(c_2n^2\epsilon^{-2}) + \exp(Cn^2\epsilon^{-2}) \le 2\exp(c_3n^2\epsilon^{-2}),  
\end{align*}
with $c_3 = \max(c_2,C)$. 
Finally,
\[
2\exp(c_3 n^2\epsilon^{-2})
= \exp\!\bigl(\log 2 + c_3 n^2\epsilon^{-2}\bigr)
\le \exp(c_1\epsilon^{-2}),
\]
where \(c_1 = n^2\log 2 + c_3\) depends only on \(\alpha\).
\end{Remark}

\subsection{On the growth of denominators of convergents for algebraic numbers in \texorpdfstring{$\mathbb Q_p$}{Qp}}

In this section, we establish a bound on the $p$--adic norm of denominators of convergents in the $p$--adic continued fraction expansion of algebraic numbers, providing a $p$--adic analogue of an important result of Davenport and Roth \cite{DR}. As a consequence, we have a useful sufficient condition for the transcendence of $p$-adic continued fractions.

\begin{Lemma} \label{p lioville}
Let $\alpha \in \mathbb{Q}_{p}$ be algebraic of degree $n \ge 2$ over $\mathbb{Q}$, then there exists $c:=c(\alpha)>0$ such that  $a, b \in \mathbb{Z}, b \neq 0$, we have
$$
\left|\alpha-\frac{a}{b}\right|_{p} \geq \frac{c}{\max \left(|a|_{\infty},|b|_{\infty}\right)^n}.
$$    
\end{Lemma}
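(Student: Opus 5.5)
We prove this $p$-adic Liouville inequality by the classical argument, using the minimal polynomial $f(x)=x^n+a_1x^{n-1}+\dots+a_n\in\mathbb Z[x]$ of $\alpha$. Put $H=\max(|a|_\infty,|b|_\infty)$. Since $f$ is irreducible of degree $n\ge 2$, it has no rational root, so
\[
N:=b^n f(a/b)=a^n+a_1a^{n-1}b+\dots+a_nb^n
\]
is a nonzero rational integer. The product formula then gives $|N|_p\ge |N|_\infty^{-1}$, and the triangle inequality gives $|N|_\infty\le (1+|a_1|+\dots+|a_n|)H^n$. Hence $|N|_p\ge c_0H^{-n}$ with $c_0=(1+\sum|a_i|)^{-1}$.

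\emph{Step 1: the case where $a/b$ is $p$-adically far from $\alpha$.} Suppose $|\alpha-a/b|_p\ge 1$. Since $H\ge 1$, the claim already holds for any $c\le 1$.

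\emph{Step 2: the case where $a/b$ is $p$-adically close to $\alpha$.} Suppose $|\alpha-a/b|_p<1$. Write $f(x)=(x-\alpha)g(x)$ with $g\in\mathbb Q_p(\alpha)[x]$; in fact $g\in\mathbb Q_p[x]$, since $\alpha\in\mathbb Q_p$. Because $|a/b|_p\le\max(|\alpha|_p,1)$, the quantity $|g(a/b)|_p$ is bounded by a constant $K=K(\alpha)$: this follows from the ultrametric inequality applied to the coefficients of $g$, which are polynomial in $\alpha$ and the $a_i$. Then
\[
|f(a/b)|_p=|\alpha-a/b|_p\,|g(a/b)|_p\le K|\alpha-a/b|_p.
\]
On the other hand, $|f(a/b)|_p=|N|_p\,|b|_p^{-n}\ge |N|_p\ge c_0H^{-n}$, using $|b|_p\le 1$. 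Combining the two bounds gives $|\alpha-a/b|_p\ge (c_0/K)H^{-n}$.

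\emph{Step 3: conclusion.} Take $c=\min(1,c_0/K)$.

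The main obstacle is purely bookkeeping: checking that the bound on $|g(a/b)|_p$ is uniform, which needs care with the possibility $|\alpha|_p>1$. There is also a subtlety in Step 2, where $|b|_p^{-n}\ge 1$ works in our favour. Alternatively, one can avoid dividing by $b^n$ entirely by working with $|b\alpha-a|_p$ and noting that $|\alpha-a/b|_p\ge|b\alpha-a|_p$.
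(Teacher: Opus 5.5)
Your proof is correct and follows essentially the paper's route: a far/close case split, the factorization $f(a/b)=(a/b-\alpha)\cdot h$, and the lower bound $|f(a/b)|_p\ge|N|_p\ge|N|_\infty^{-1}$ for the nonzero integer $N=b^nf(a/b)$. The differences are minor: the paper uses a Taylor expansion and the threshold $\tilde c$ to show that the cofactor $h$ (which is exactly your $g(a/b)$) has norm exactly $|f'(\alpha)|_p$, whereas you only bound it above using threshold $1$; you also justify $N\neq 0$, which the paper leaves implicit. One small correction: a monic minimal polynomial has integer coefficients only when $\alpha$ is an algebraic integer, so you should take a primitive integer polynomial $d_nx^n+\dots+d_0$ vanishing at $\alpha$ (as the paper does), after which your argument goes through verbatim with $(1+\sum_i|a_i|)^{-1}$ replaced by $(\sum_i|d_i|)^{-1}$.
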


\begin{proof}
Let $f(x) \in \mathbb{Z}[x]$ be the minimal polynomial of $\alpha$ over $\mathbb{Q}$.
Since $\mathbb{Q}_p$ has characteristic zero, the Taylor expansion of \(f\)
around \(\alpha\) yields
\begin{align*}
f(x)= f(\alpha) +(x-\alpha) f^{\prime}(\alpha)+(x-\alpha)^{2} \frac{f^{\prime \prime}(\alpha)}{2!}+\ldots+(x-\alpha)^{n} \frac{f^{(n)}(\alpha)}{n!}.    
\end{align*}
Since $f(\alpha) = 0$, we have
\begin{align*}
f\left(\frac{a}{b}\right)= \left(\frac{a}{b}-\alpha\right) \cdot \left( f^{\prime}(\alpha) +\left(\frac{a}{b}-\alpha \right) \frac{f^{\prime \prime}(\alpha)}{2!} + \ldots + \left(\frac{a}{b}-\alpha\right)^{n-1} \frac{f^{(n)}(\alpha)}{n!} \right). 
\end{align*}
We set
\begin{align*}
g(\alpha) = \left(\frac{a}{b}-\alpha \right) \frac{f^{\prime \prime}(\alpha)}{2!} + \ldots + \left(\frac{a}{b}-\alpha\right)^{n-1} \frac{f^{(n)}(\alpha)}{n!}. 
\end{align*}
Then,
\begin{align*}
f\left(\frac{a}{b}\right)= \left(\frac{a}{b}-\alpha\right) \cdot \left( f^{\prime}(\alpha) + g(\alpha) \right), 
\end{align*}
and
\begin{align*}
\left|\frac{a}{b}-\alpha\right|_p =
\frac{\left| f\left(\frac{a}{b}\right) \right|_p}{\left|f^{\prime}(\alpha) + g(\alpha)\right|_p}
\ge \frac{\left| f\left(\frac{a}{b}\right) \right|_p}{\max(\left|f^{\prime}(\alpha)\right|_p, \left|g(\alpha)\right|_p)}. 
\end{align*}
By the ultrametric inequality, we have
\[
|g(\alpha)|_p = \left| \sum_{k=2}^{n} \frac{f^{(k)}(\alpha)}{k!} \left(\frac{a}{b} - \alpha\right)^{k-1} \right|_p 
\le \max_{2 \le k \le n}\left( \left| \frac{f^{(k)}(\alpha)}{k!} \right|_p \, \left|\frac{a}{b} - \alpha\right|_p^{\,k-1}\right).
\]
Now define
\[
C(\alpha) := \max_{2 \le k \le n} \left| \frac{f^{(k)}(\alpha)}{k!} \right|_p, \quad \tilde{c} = \frac{|f^{\prime}(\alpha)|_p}{C(\alpha)}.
\]
If 
$\left| \frac{a}{b} - \alpha \right|_p \ge \tilde{c}$,
we can take $c = \tilde{c}(\alpha)$ since
$$
\left|\frac{a}{b}-\alpha\right|_p \geq \tilde{c} \geq \frac{\tilde{c}}{\max \left(|a|_{\infty},|b|_{\infty}\right)^n} \qquad \text{since} \quad a, b \in \mathbb{Z}.
$$
If 
$\left| \frac{a}{b} - \alpha \right|_p < \tilde{c}$,
\[
|g(\alpha)|_p \le C(\alpha) \, \left|\frac{a}{b} - \alpha\right|_p < |f^{\prime}(\alpha)|_p.
\]
and hence, $\max\left(\left|f^{\prime}(\alpha)\right|_{p},|g(\alpha)|_{p}\right)=\left|f^{\prime}(\alpha)\right|_{p}$, from which
\begin{align*}
\left|\alpha-\frac{a}{b}\right|_{p}=\frac{\left|f\left(\frac{a}{b}\right)\right|_{p}}{\left|f^{\prime}(\alpha)\right|_{p}}.   
\end{align*}
Writing $f(x)=c_{n} x^{n}+c_{n-1} x^{n-1}+\ldots+c_{1} x+c_{0}$, with $c_i \in \mathbb Z$, then
$$
f\left(\frac{a}{b}\right)=\frac{c_n a^{n}+c_{n-1} b a^{n-1}+\ldots+c_{1} b^{n-1} a+c_{0} b^{n}}{b^{n}}.
$$
Denote by \(N \in \mathbb Z\) the numerator of this expression. 
We obtain
\begin{align*}
\left|\alpha-\frac{a}{b}\right|_{p}= & \frac{|f(\frac{a}{b})|_{p}}{\left|f^{\prime}(\alpha)\right|_{p}}
=\frac{|N|_{p}}{|b^n|_{p} \left|f^{\prime}(\alpha)\right|_{p}} 
\geq \frac{1}{|N|_{\infty}\left|f^{\prime}(\alpha)\right|_{p}}.
\end{align*}
Note that
\begin{align*}
|N|_{\infty} &\leq \left|c_d a^{n}\right|_{\infty}+\left|c_{n-1} ba^{n-1}\right|_{\infty}+\dots+\left|c_{0} b^{n}\right|_{\infty}\\
&\leq |c_n|_\infty \max(|a|_\infty, |b|_\infty)^n + \left|c_{n-1}\right|_{\infty} \max(|a|_\infty, |b|_\infty)^n + \dots +  |c_0|_\infty \max(|a|_\infty, |b|_\infty)^n\\
&\leq \hat{c} \max(|a|_\infty, |b|_\infty)^n,
\end{align*}
where $\hat{c} = |c_n|_\infty + \dots + |c_0|_\infty$ depends only on $\alpha$.
Setting $c=\frac{1}{\hat{c} \cdot\left|f^{\prime}(\alpha)\right|_{p}}$, we obtain the thesis:
\begin{align*}
\left|\alpha-\frac{a}{b}\right|_{p}
\geq \frac{c}{\max(|a|_\infty, |b|_\infty)^n}.
\end{align*}
\end{proof}

\begin{Theorem} \label{thm:loglog}
Let $\alpha \in \mathbb{Q}_p$ be algebraic of degree $n\ge2$ and
$A_{k} / B_{k}$ be the convergents of its $p$-adic continued fraction expansion.
Consider 
\begin{align*}
A_{k}=\frac{\tilde{A}_{k}}{p^{e_{k}}}, \quad B_{k}=\frac{\tilde{B}_{k}}{p^{f_{k}}},
\end{align*}
where $\tilde{A}_{k}, \tilde{B}_{k} \in \mathbb{Z}$, $\gcd(\tilde{A}_k, \tilde{B}_k)=1$ and $p \nmid \tilde{A}_k\tilde{B}_k$. Assume that
$f_k \ge e_k$ and $|\tilde{A}_k p^{f_k - e_k}|_\infty \le |\tilde{B}_k|_\infty$.
Then
\begin{equation}\label{(11_new)}
\log \log |B_{k}|_p<\frac{c(\alpha) k}{(\log k)^{1 / 2}},
\end{equation}
where $c(\alpha)$ is independent of $k$.
\end{Theorem}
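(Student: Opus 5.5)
We follow the Davenport--Roth strategy: we feed the convergents, rewritten as reduced integer fractions, into the quantitative Ridout bound (Corollary \ref{coroquanti} together with Remark \ref{RemarkbetterBound}) and combine it with the Liouville-type lower bound of Lemma \ref{p lioville}.

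\textbf{Integer fractions.} Since $f_k\ge e_k$, we have
\[
\frac{A_k}{B_k}=\frac{a_k}{b_k},\qquad a_k:=\tilde A_k p^{f_k-e_k},\quad b_k:=\tilde B_k .
\]
These are integers with $\gcd(a_k,b_k)=1$, because $p\nmid \tilde B_k$ and $\gcd(\tilde A_k,\tilde B_k)=1$. After a sign change we may take $b_k>0$, and by hypothesis $|a_k|_\infty\le |b_k|_\infty$. By Proposition \ref{prop:several}(3), $|b_k|_\infty=|B_k|_\infty p^{f_k}\le |B_k|_p p^{f_k}=|B_k|_p^2$, using $|B_k|_p=p^{f_k}$. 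Combining this with Proposition \ref{prop:several}(1) gives
\[
\left|\alpha-\frac{a_k}{b_k}\right|_p<|B_k|_p^{-2},\qquad b_k\le |B_k|_p^2 .
\]
Hence $a_k/b_k$ satisfies \eqref{p adic quantitative2} with exponent $2+\epsilon$ as soon as $|B_k|_p^{2}\ge b_k^{1+\epsilon/2}$. This holds if $b_k\le |B_k|_p^{2-\epsilon}$, which is the trouble spot in the comparison. I therefore expect the actual comparison to use $|b_k|_\infty$ directly in terms of $|B_k|_p$. It may be that the intended bound is simply $|\alpha-a_k/b_k|_p<|B_k|_p^{-2}$ with $b_k\le |B_k|_p^{2}$, and that one runs the Davenport--Roth argument on the growth of $|B_k|_p$ itself, with Ridout's inequality taken in the $p$-adic form $\min(1,|\alpha-A/B|_p)\cdot|\text{denominator}|$.

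\textbf{Growth and counting.} The main mechanism is the following. Since $|B_{k+1}|_p=|B_k|_p|b_{k+1}|_p\ge p|B_k|_p$, the quantities $|B_k|_p$ grow at least geometrically. Lemma \ref{p lioville} then gives, for the integer fraction, $|B_{k+1}|_p^{-1}\ge|\alpha-A_k/B_k|_p\ge c\,b_k^{-n}$, and hence $\log|B_{k+1}|_p\le n\log b_k+O(1)\le 2n\log|B_k|_p+O(1)$. So each step multiplies $\log|B_k|_p$ by at most about $2n$. Call an index $k$ \emph{large} if $\log|B_{k+1}|_p\ge (1+\epsilon)\log|B_k|_p$. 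For such $k$, the fraction $a_k/b_k$ is a solution of \eqref{p adic quantitative2} with a comparable $\epsilon$, since $|\alpha-a_k/b_k|_p<|B_{k+1}|_p^{-1}|B_k|_p^{-1}$. By Remark \ref{RemarkbetterBound}, there are at most $\exp(c_1\epsilon^{-2})$ large indices. Distinct convergents give distinct fractions, so this count is legitimate.

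\textbf{Conclusion and main obstacle.} Over the first $k$ steps we obtain
\[
\log\log|B_k|_p\le k\log(1+\epsilon)+\exp(c_1\epsilon^{-2})\log(2n)+O(1).
\]
We choose $\epsilon=c'(\log k)^{-1/2}$ with $c'$ large enough that $\exp(c_1\epsilon^{-2})\le k^{1/2}$. This yields $\log\log|B_k|_p\ll k(\log k)^{-1/2}$, which is \eqref{(11_new)}. The main obstacle is the exponent bookkeeping in the first step: one must relate the archimedean size $b_k$ of the reduced fraction to $|B_k|_p$ precisely enough that the approximation exponent stays above $2$, and similarly control $b_k$ in the Liouville step. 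I would first try to show $b_k\le |B_k|_p$, and if that fails, absorb the factor $2$ into the constants, taking large indices relative to the multiplicative growth of $\log b_k$ instead.
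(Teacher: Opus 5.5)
Your outline matches the paper's: count the indices with $|B_{k+1}|_p>|B_k|_p^{1+\epsilon}$ by the quantitative Ridout bound, control the other steps by the Liouville lemma, and take $\epsilon\asymp(\log k)^{-1/2}$. But the step you flag is a genuine gap, and the whole argument rests on it.

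\textbf{The gap.} You assert that for such $k$ the fraction $a_k/b_k$ (with $b_k=\tilde B_k$) solves the Ridout inequality ``with a comparable $\epsilon$''. This is unproved. From $|\alpha-a_k/b_k|_p=|B_k|_p^{-1}|B_{k+1}|_p^{-1}<|B_k|_p^{-2-\epsilon}$ and $b_k\le|B_k|_p^2$ you only get $|\alpha-a_k/b_k|_p<b_k^{-1-\epsilon/2}$, far below exponent $2$. (Also, the condition you wrote should be $|B_k|_p\ge b_k^{1+\epsilon/2}$, not $|B_k|_p^2\ge b_k^{1+\epsilon/2}$.) Neither of your repairs works:
\begin{itemize}
\item Since $b_k=|B_k|_\infty|B_k|_p$, the bound $b_k\le|B_k|_p$ amounts to $|B_k|_\infty\le1$, which is false in general.
\item The factor $2$ sits in an exponent, so it cannot be absorbed into constants.
\item Measuring jumps by $\log b_k$ does not help. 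The available bound $|B_k|_p|B_{k+1}|_p\ge(b_kb_{k+1})^{1/2}$ puts $a_k/b_k$ in Ridout's range only when $b_{k+1}>b_k^{3+2\epsilon}$. So non-exceptional steps are controlled only up to a factor of about $3$, which gives nothing beyond the $O(k)$ bound Liouville already provides.
\end{itemize}
The paper's written argument does not close this either. Having reached $|B_k|_\infty^{-2-\epsilon}=p^{f_k(2+\epsilon)}|\tilde B_k|_\infty^{-2-\epsilon}$, it notes that solutions of the Ridout inequality in $\tilde B_k$ satisfy this bound. That is the converse of what the count needs.

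\textbf{A possible repair.} What is missing is an a priori archimedean bound.
\begin{itemize}
\item For both Browkin and Ruban digits, the partial quotients satisfy $|s(\alpha_i)|_\infty<p$. Moreover $\max(|B_k|_\infty,|B_{k-1}|_\infty)\le(1+|s(\alpha_k)|_\infty)\max(|B_{k-1}|_\infty,|B_{k-2}|_\infty)$, so $|B_k|_\infty\le(p+1)^k$.
\item Hence $b_k\le(p+1)^k|B_k|_p\le|B_k|_p^{1+\epsilon/4}$ as soon as $\log|B_k|_p\ge4\epsilon^{-1}k\log(p+1)$.
\item Call $k$ exceptional only when this holds together with $|B_{k+1}|_p>|B_k|_p^{1+\epsilon}$. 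Then $|\alpha-a_k/b_k|_p<b_k^{-(2+\epsilon)/(1+\epsilon/4)}\le b_k^{-2-\epsilon/3}$.
\item Corollary~\ref{coroquanti} and Remark~\ref{RemarkbetterBound} then bound the number of exceptional indices by $\exp(9c_1\epsilon^{-2})$.
\item For a given $k$, run your product from the last $j_0\le k$ with $\log|B_{j_0}|_p<4\epsilon^{-1}j_0\log(p+1)$ (or from $1$ if there is none), using one Liouville step at $j_0$. This adds only $\log k+\log\epsilon^{-1}+O(1)$ to your bound for $\log\log|B_k|_p$.
\end{itemize}
After this, your endgame goes through. There, your choice $\exp(c_1\epsilon^{-2})\le k^{1/2}$ is the right one. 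The paper instead bounds $c_3k^{c_1c_4^{-2}}$ by $c_3k$, which yields only $O(k)$, not $O(k/(\log k)^{1/2})$.
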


\begin{proof}
We know that
\begin{equation}\label{(33_new)}
\left|\beta-\frac{A_{k}}{B_{k}}\right|_p = \frac{1}{|B_{k}|_p |B_{k+1}|_p}.
\end{equation}
and for all sufficiently large \(k\), we also have
\(|B_k|_\infty < |B_k|_p.\)
If 
\begin{align*}
|B_{k+1}|_p >|B_{k}|_p^{1+\epsilon},
\end{align*}
then the rational number \(\tilde A_k p^{f_k-e_k}/\tilde B_k\) satisfies
\begin{align}\label{w1}
\left|\alpha - \frac{\tilde{A}_{k}p^{f_k - e_k}}{\tilde{B}_{k}}\right|_p &= \left|\alpha - \frac{A_{k}}{B_{k}}\right|_p = \frac{1}{|B_{k}|_p |B_{k+1}|_p} < \frac{1}{{|B_{k}|_p}^{2 + \epsilon}}\\
&< \frac{1}{{|B_{k}|_\infty}^{2 + \epsilon}} =
\frac{{|p^{f_k}_{k}|_\infty}^{2 + \epsilon}}{{|\tilde{B}_{k}|_\infty}^{2 + \epsilon}}.
\end{align}
In particular, any solution of
\begin{align*}
\left|\alpha - \frac{\tilde{A}_{k}p^{f_k - e_k}}{\tilde{B}_k}\right|_p < \frac{1}{ |\tilde{B}_k|_\infty^{2+\epsilon}},
\end{align*}
also satisfies~\eqref{w1}.
Therefore, by Corollary~\ref{coroquanti} and Remark~\ref{RemarkbetterBound},
the number of indices \(k\) such that
\[
|B_{k+1}|_p > |B_k|_p^{1+\epsilon}
\]
is at most
\[
\exp\!\left(c_1\epsilon^{-2}\right),
\]
where \(c_1>0\) depends only on \(\alpha\).
In particular, there are only finitely many of these 'exceptional' indices. 

Moreover, by Lemma \ref{p lioville}, 
\begin{align*}
\left|\alpha-\frac{A_{k}}{B_{k}}\right|_{p}=\left|\alpha-\frac{\tilde{A}_{k} \cdot\left|B_{k}\right|_{p}}{\tilde{B}_{k} \cdot\left|A_{k}\right|_{p}}\right|_{p} \geq \frac{c}{\max \left(\left|\tilde{A}_{k}\right|_{\infty} \cdot\left|B_{k}\right|_{p},\left|\tilde{B}_{k}\right|_{\infty} \cdot\left|A_{k}\right|_{p}\right)^{n}},
\end{align*}
where $c = c(\alpha)$.
Since $f_k \ge e_k$, we have 
\begin{align*}
\left| \tilde{A}_k\right|_\infty \le \left|\tilde{A}_k p^{f_k - e_k}\right|_\infty \le \left|\tilde{B}_k\right|_\infty.
\end{align*}
By hypothesis, \(|B_k|_p\ge |A_k|_p\), thus we obtain
\begin{align*}
\max \left(\left|\tilde{A}_{k}\right|_{\infty} \cdot\left|B_{k}\right|_{p},\left|\tilde{B}_{k}\right|_{\infty} \cdot\left|A_{k}\right|_{p}\right) &\leq \left|B_{k}\right|_{p} \cdot \max \left(\left|\tilde{A}_{k}\right|_{\infty},\left|\tilde{B}_{k}\right|_{\infty}\right) \\
&\leq \left|B_{k}\right|_{p} \cdot\left|\tilde{B}_{k}\right|_{\infty} = \left|B_{k}\right|_{p} \cdot \left|B_{k}\right|_{\infty} \cdot \left|B_k\right|_{p} \\
&\leq \left|B_{k}\right|_{p}^{3}.
\end{align*}
Combining this with~\eqref{(33_new)}, we deduce
\begin{align*}
\frac{1}{|B_{k}|_p |B_{k+1}|_p} \ge \frac{c}{\left|B_{k}\right|_{p}^{3n}},
\end{align*}
and hence
\begin{align*}
|B_{k+1}|_p \le \frac{1}{c}\left|B_{k}\right|_{p}^{3n-1} \le \left|B_{k}\right|_{p}^{c \prime (3n-1)},
\end{align*}
for a constant \(c'=c'(\alpha)\) sufficiently large.
Therefore,
\begin{align*}
\frac{\log|B_{k+1}|_p}{\log \left|B_{k}\right|_{p}} \le c_2,
\end{align*}
where $c_2 = \log(c \prime (3n-1))$.

Apart from the above exceptional indices, we have
$$
\frac{\log |B_{k+1}|_p}{\log |B_{k}|_p} \leq 1+\epsilon.
$$
Summarizing, we know that

\begin{itemize}
\item $\displaystyle \frac{\log |B_{k+1}|_p}{\log |B_{k}|_p} \le 1 + \epsilon, 
       \qquad \forall\, k \ \text{non-exceptional}$,
\item $\displaystyle \frac{\log |B_{k+1}|_p}{\log |B_{k}|_p} \le c_2, 
       \qquad \forall\, k \ \text{exceptional}$,
\item The number of exceptional $k$ is 
       $\le \exp\!\left( c_1 \epsilon^{-2} \right)$. 
\end{itemize}

Thus, setting $r_i = \log |B_{i+1}|_p/\log |B_i|_p$, we have
\(r_i \le c_2\) for the exceptional values of $i$, $r_i \le 1 + \epsilon$ otherwise.
For all $k > 1$,
\[
\log |B_k|_p = \left(\prod_{i=1}^{k-1} r_i \right) \log |B_1|_p.
\]
Splitting the product into exceptional and non-exceptional factors gives
\[
\prod_{i=1}^{k-1} r_i 
\le (1+\epsilon)^{(k-1)-E(k-1)} \, c_2^{E(k-1)}
\le (1+\epsilon)^{k-1} \, c_2^{E_{\max}},
\]
where $E(k-1)$ is the number of exceptional indices $i \le k-1$, and $E_{\max}$ is the total number of exceptional indices.
Using the previous bound $E_{\max} \le \exp(c_1 \epsilon^{-2})$, we obtain
\[
\log |B_k|_p \le \log |B_1|_p (1+\epsilon)^{k} \, c_2^{\exp(c_1 \epsilon^{-2})}.
\]
Hence,
\[
\log \log |B_{k}|_p < k \log(1+\epsilon) + c_3 \exp\big(c_1 \xi^{-2}\big),
\]
where \(c_3 = \log(c_2)\).  
Choosing $\epsilon = c_4 (\log k)^{-1/2}$ with $c_4$ sufficiently large, such that $c_1 c_4^{-2} < 1$, we obtain
\begin{align*}
\log \log |B_{k}|_p &< k \log(1+ c_4 (\log k)^{-1/2}) + c_3 \exp\big(c_1 c_4^{-2} \log k \big)\\
&= k \log(1+ c_4 (\log k)^{-1/2}) + c_3 k^{c_1 c_4^{-2}}\\
&< k \log(1+ c_4 (\log k)^{-1/2}) + c_3 k.
\end{align*}
Recall that $\lim_{x \to 0} \log(1+x)/x = 1$, hence for sufficiently large $k > c_5$, we can approximate
$\log(1+ c_4 (\log k)^{-1/2})$ with
$c_4 (\log k)^{-1/2}$.
Thus,
\begin{align*}
\log \log |B_{k}|_p &< k c_4 (\log k)^{-1/2} + c_3 k\\
&= k (c_4 (\log k)^{-1/2} + c_3)\\
&< \frac{c(\alpha)k}{(\log k)^{1/2}},
\end{align*}
where $c(\alpha)$ is sufficiently large, that is $c(\alpha) > c_4 + c_3 (\log k)^{-1/2}$.
\end{proof}

\subsection{Quasi--periodic \texorpdfstring{$p$}{p}--adic continued fractions}

 In the following, we prove that a quasi--periodic continued fraction, under certain assumption, converges either to a transcendental number or to a quadratic irrational. The assumptions considered here are more relaxed than those in \cite{CT, LMS}.

\begin{Lemma}\label{bakerlemma3_p}
For all $n \ge 0$, we have
\begin{equation}\label{baker10_p}
|B_n|_p \geq \left(\frac{1}{2}(1+\sqrt{5})\right)^{n-1}
\end{equation}
\end{Lemma}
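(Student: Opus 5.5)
The plan is to use the product formula \eqref{eq:vB}, namely $|B_n|_p=\prod_{j=1}^n|b_j|_p$, together with the fact, recalled in the preliminaries, that every partial quotient satisfies $|b_j|_p\ge p$ for $j\ge 1$. From these,
\[
|B_n|_p\ge p^{n}\ge 3^{n}
\]
for all $n\ge 1$, since we always assume $p$ odd.

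Since $3>\frac12(1+\sqrt5)$, it follows at once that $|B_n|_p\ge 3^n\ge \left(\frac12(1+\sqrt5)\right)^{n-1}$ for $n\ge 1$. The case $n=0$ is checked directly: $B_0=b_0B_{-1}+B_{-2}=1$, so $|B_0|_p=1\ge \left(\frac12(1+\sqrt5)\right)^{-1}$.

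If one prefers a proof mirroring Baker's archimedean Lemma 3, the alternative route is an induction on $n$ with the Fibonacci-type inequality $|B_n|_p\ge |B_{n-1}|_p+|B_{n-2}|_p$. Writing $\phi=\frac12(1+\sqrt5)$, this gives $|B_n|_p\ge \phi^{n-2}+\phi^{n-3}=\phi^{n-1}$, using $\phi^2=\phi+1$.

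That inequality itself comes from the ultrametric structure. We have $|B_{n-1}|_p>|B_{n-2}|_p$, so $|b_nB_{n-1}|_p\ge p|B_{n-1}|_p$ strictly dominates $|B_{n-2}|_p$. Hence $|B_n|_p=|b_n|_p|B_{n-1}|_p\ge 3|B_{n-1}|_p\ge |B_{n-1}|_p+|B_{n-2}|_p$. Either way, nothing substantive is needed beyond \eqref{eq:vB} and the bound $|b_j|_p\ge p$. The only point requiring care is the small cases $n=0,1$, where $B_{-1}=0$ and the recurrence must be seeded correctly.
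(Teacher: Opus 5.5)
Your proof is correct and is essentially the paper's argument. The paper inducts using $|B_{k+1}|_p=|B_k|_p|b_{k+1}|_p$ and $|b_{k+1}|_p\ge p>\frac12(1+\sqrt5)$, which is your use of \eqref{eq:vB} with $|b_j|_p\ge p$ written as an induction, together with the same direct check that $B_0=1$.
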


\begin{proof}
We proceed by induction. Note that $B_0 = 1$ and $|B_0|_p = 1 \ge \left( \frac{1 + \sqrt{5}}{2}\right)^{-1}$. 

Moreover, $B_1 = b_1$, thus 
\begin{align*}
|B_1|_p = |b_1|_p \ge p > 1 = \left( \frac{1 + \sqrt{5}}{2}\right)^{0}.
\end{align*}
Let $k$ be a positive integer and assume \eqref{baker10_p} holds for $n=1,2,\dots,k$.  
We have
$$
\begin{aligned}
|B_{k+1}|_p & = |B_{k}|_p \cdot |b_{k+1}|_p\\
& \geq \left(\frac{1}{2}(1+\sqrt{5})\right)^{k-1}\cdot |b_{k+1}|_p\\
&\ge \left(\frac{1}{2}(1+\sqrt{5})\right)^{k},
\end{aligned}
$$
where in the last equality we used that
$|b_{k+1}|_p \ge p > 2 > \frac{1}{2}(1+\sqrt{5})$.
\end{proof}

\begin{Lemma}\label{lemma:height}
Suppose $\alpha = [0,b_1, b_2, \dots] \in \mathbb{Q}_p$. Let $h$, $k$ be positive integers and
\begin{align}\label{baker9}
\eta = \left[0, b_1, \dots, b_{h-1}, \overline{b_h, \dots, b_{h+k-1}}\right].
\end{align}
Then $\eta$ has naive height less than $2|B_{h+k-1}|_p^2$.
\end{Lemma}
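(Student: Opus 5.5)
The plan is to follow Baker's classical argument: show that $\eta$ is a root of an explicit quadratic polynomial whose coefficients are built from the convergents of $\alpha$, and then bound those coefficients using Proposition \ref{prop:several}(3) together with the monotonicity of $|B_i|_p$ from \eqref{eq:vB}.

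\textbf{Step 1: the quadratic equation.} Let $\theta=[\overline{b_h,\dots,b_{h+k-1}}]$ be the purely periodic tail, so that $\theta=[b_h,\dots,b_{h+k-1},\theta]$. Since the first $h+k-1$ partial quotients of $\eta$ agree with those of $\alpha$, the convergents $A_i/B_i$ for $i\le h+k-1$ are common to both. The identity $\alpha_0=(\alpha_{i+1}A_i+A_{i-1})/(\alpha_{i+1}B_i+B_{i-1})$, applied to $\eta$ at two places, gives
\[
\eta=\frac{\theta A_{h-1}+A_{h-2}}{\theta B_{h-1}+B_{h-2}}=\frac{\theta A_{h+k-1}+A_{h+k-2}}{\theta B_{h+k-1}+B_{h+k-2}} .
\]
Solving each expression for $\theta$ and equating the two results yields
\[
(A_{h-2}-\eta B_{h-2})(\eta B_{h+k-1}-A_{h+k-1})=(A_{h+k-2}-\eta B_{h+k-2})(\eta B_{h-1}-A_{h-1}).
\]
This is a quadratic equation $P\eta^2+Q\eta+R=0$ with
\[
P=B_{h-1}B_{h+k-2}-B_{h-2}B_{h+k-1},\qquad R=A_{h-1}A_{h+k-2}-A_{h-2}A_{h+k-1},
\]
and $Q$ a sum of analogous mixed $A$--$B$ products. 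I will check that this polynomial is not identically zero. The determinant relation $A_iB_{i-1}-B_iA_{i-1}=(-1)^i$ should make this immediate; if the polynomial degenerates, $\eta$ is rational and the bound is trivial.

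\textbf{Step 2: bounding the coefficients.} Each coefficient is a difference of two products of the form $X_iY_j$, where $X,Y\in\{A,B\}$, $i\le h-1$ and $j\le h+k-1$. By Proposition \ref{prop:several}(3) we have $|B_i|_\infty\le|B_i|_p$. From \eqref{eq:vA0}, with $b_0=0$, we get $|A_i|_p\le |B_i|_p$, and Proposition \ref{prop:several}(3) also controls $|A_i|_\infty$. By \eqref{eq:vB}, $|B_i|_p$ is strictly increasing, since every $|b_j|_p\ge p$. Hence each product is at most $|B_{h-1}|_p|B_{h+k-1}|_p<|B_{h+k-1}|_p^2$, and each coefficient is bounded by $2|B_{h+k-1}|_p^2$, with strict inequality coming from $|B_{h-1}|_p<|B_{h+k-1}|_p$.

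\textbf{Main obstacle: integrality of the coefficients.} The $A_i,B_i$ lie in $\mathbb Z[p^{-1}]$, not in $\mathbb Z$, so the naive height must be interpreted with care. If it is measured on integer coefficients, I must clear the $p$-power denominators. The denominator of $B_i$ is exactly $|B_i|_p$, and that of $A_i$ divides it. Multiplying through by $|B_{h-1}|_p|B_{h+k-1}|_p$ makes every coefficient integral, but a naive estimate then loses a further factor of order $|B_{h+k-1}|_p^2$.

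To avoid this loss I would first try the factorization $M_n=\prod\begin{pmatrix}b_i&1\\1&0\end{pmatrix}$ (as in Lemma \ref{lemma:a2}). This writes the coefficients as entries of $M_{h-1}\,N\,M_{h-1}^{-1}$-type expressions, where $N$ is the period block. Since $\det M_{h-1}=\pm1$, the $p$-denominators of $M_{h-1}$ and $M_{h-1}^{-1}$ should cancel. The goal is to show that the only genuine denominator remaining is that of the period block, of size $|B_{h+k-1}|_p/|B_{h-1}|_p$. Combined with the Archimedean bound from Step 2, this should still give a height $<2|B_{h+k-1}|_p^2$.

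If this cancellation fails, I would fall back on reading ``naive height'' as the maximum Archimedean size of the $\mathbb Z[p^{-1}]$-coefficients. That is exactly what Step 2 bounds, and it is what the subsequent Baker-type argument actually needs when combined with Lemma \ref{p lioville}.
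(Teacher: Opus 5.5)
Your Steps 1 and 2 are the paper's proof. The paper derives the same quadratic relation from the two expressions of $\eta$ through the common complete quotient $\eta_h=\eta_{h+k}$; your $P,Q,R$ are the negatives of the paper's. It then bounds $|P|_\infty,|Q|_\infty,|R|_\infty$ using Proposition~\ref{prop:several}(3), $|A_i|_p\le|B_i|_p$ and the monotonicity of $|B_i|_p$. The paper never clears the $p$-power denominators. Its ``naive height'' is therefore exactly your fallback reading, the largest Archimedean absolute value of the $\mathbb{Z}[p^{-1}]$-coefficients, and with that reading your argument is correct.

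Two small repairs are needed, and the paper glosses over both. First, $Q$ is a sum of four products, not a difference of two. You need $|B_{h-1}|_p\le p^{-k}|B_{h+k-1}|_p$ (from \eqref{eq:vB}) to get $|Q|_\infty\le 4|B_{h-1}|_p|B_{h+k-1}|_p\le\frac43|B_{h+k-1}|_p^2$. Second, $(P,Q,R)\neq(0,0,0)$, but not for the reason you suggest. If all three vanished, the period matrix $\prod_{j=h}^{h+k-1}\begin{pmatrix} b_j&1\\1&0\end{pmatrix}$ would have to be scalar. Its off-diagonal entries are continuants of $p$-adic absolute value at least $1$, so this cannot happen.

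However, the cancellation you hoped for in your ``main obstacle'' paragraph does not happen, and the integer-coefficient version of the statement is false. Having $\det M_{h-1}=\pm1$ does not make $M_{h-1}$ $p$-integral. So $M_{h-1}^{-1}$ has equally large $p$-adic denominators, and conjugating the period block multiplies them rather than cancelling them.

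Concretely, take $p=3$, $h=5$, $k=1$, $b_1=b_2=b_3=b_4=4/3$, $b_5=2/3$; these have admissible Browkin digits. Then $A_3=25/9$, $A_4=B_3=136/27$, $A_5=497/81$, $B_4=769/81$ and $B_5=2762/243$. Your coefficients are $P=215729/3^8$, $Q=-72526/3^7$ and $R=6071/3^6$. Their common denominator is $3^8$, not $|B_5|_3/|B_4|_3=3$. The primitive integer minimal polynomial of $\eta=[0,\tfrac43,\tfrac43,\tfrac43,\tfrac43,\overline{\tfrac23}]$ is $215729x^2-217578x+54639$. Its height is $217578>2|B_5|_3^2=118098$.

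So you should commit to the $\mathbb{Z}[p^{-1}]$ reading, as the paper implicitly does. Integer coefficients are needed later, since Lemma~\ref{p lioville} is stated for $a,b\in\mathbb{Z}$. Multiplying by the largest $p$-adic absolute value gives the correct bound, $4|B_{h-1}|_p^2|B_{h+k-1}|_p^2$. This is still polynomial in $|B_{h+k-1}|_p$.
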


\begin{proof}
Let $\eta_{n}$ be the $n$th complete quotient of $\eta$. 
By construction, the partial quotients satisfy $b_n=b_{n+k}$ for all $n\ge h$, hence $\eta_h=\eta_{h+k}$. By the classical identity for continued fractions
$$
\eta=\frac{\eta_{h} A_{h-1}+A_{h-2}}{\eta_{h} B_{h-1}+B_{h-2}}=\frac{\eta_{h} A_{h+k-1}+A_{h+k-2}}{\eta_{h} B_{h+k-1}+B_{h+k-2}},
$$
where we recall that $A_{-1}=1$ and $B_{-1}=0$.
From these two representations we obtain a quadratic relation
$$
P \eta^{2}+Q \eta+R=0,
$$
where
$$
\begin{aligned}
& P=B_{h-2} B_{h+k-1}-B_{h-1} B_{h+k-2} \\
& Q=B_{h-1} A_{h+k-2}+A_{h-1} B_{h+k-2}-A_{h-2} B_{h+k-1}-B_{h-2} A_{h+k-1} \\
& R=A_{h-2} A_{h+k-1}-A_{h-1} A_{h+k-2}.
\end{aligned}
$$
By Proposition \ref{prop:several},
\begin{align*}
|P|_\infty &\le |B_{h-2}|_\infty |B_{h+k-1}|_\infty + |B_{h-1}|_\infty |B_{h+k-2}|_\infty\\ 
&<
|B_{h-2}|_p |B_{h+k-1}|_p + |B_{h-1}|_p |B_{h+k-2}|_p\\
&\le 2|B_{h+k-1}|_p^2.
\end{align*}
One can easily see 
that
\begin{align*}
0 \le |A_i|_p \le |B_i|_p, \quad \forall i\ge 0. 
\end{align*}
Consequently, each term appearing in $Q,R$ is bounded in absolute value by $|B_{h+k-1}|_p^2$, and hence
$$
\max (|P|_\infty,|Q|_\infty,|R|_\infty) < 2|B_{h+k-1}|_p^2.
$$
\end{proof}

\begin{Theorem}\label{th1baker_p}
Let $\alpha=[0,b_1,b_2,\dots]$ be a quasi--periodic $p$-adic continued fraction. Assume that for $i \gg 0$ the convergent $A_i/B_i$
satisfies \(
|A_{i}|_\infty \le |B_{i}|_\infty.
\)
If $k_i < C n_i$, for $i \gg 0$ and some constant $C>0$, and 
\begin{equation} \label{eq:lim}
    \lim_{i \rightarrow \infty} \cfrac{\log \lambda_i (\log n_i)^{1/2}}{n_i} = \infty,
\end{equation}
then $\alpha$ is transcendental or quadratic irrational.
\end{Theorem}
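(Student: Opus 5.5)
Following Baker's argument, I will assume that $\alpha$ is algebraic of degree $n\ge 3$ and derive a contradiction with the quantitative Ridout bound, in the form of Corollary \ref{coroquanti} and Remark \ref{RemarkbetterBound}.

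For each large $i$ set $h=n_i$ and $k=k_i$, and let $\eta_i=[0,b_1,\dots,b_{h-1},\overline{b_h,\dots,b_{h+k-1}}]$ be the purely periodic-tail number of Lemma \ref{lemma:height}. It is quadratic (or rational) with naive height $H_i<2|B_{n_i+k_i-1}|_p^2$. By the quasi-periodicity hypothesis, $\alpha$ and $\eta_i$ share the first $n_i+\lambda_i k_i$ partial quotients. Hence, by Proposition \ref{prop:several}(2),
\[
|\alpha-\eta_i|_p<|B_{n_i+\lambda_ik_i-1}|_p^{-2}.
\]
Since $|B_{m+1}|_p\ge p|B_m|_p$, we get $|B_{n_i+\lambda_ik_i-1}|_p\ge p^{(\lambda_i-1)k_i}|B_{n_i+k_i-1}|_p$. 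So $\eta_i$ approximates $\alpha$ to within roughly $H_i^{-\lambda_i/2}$ in a multiplicative sense. The natural rational approximations to exploit are the convergents $A_m/B_m$ for $n_i+k_i\le m\le n_i+\lambda_ik_i-1$.

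The key quantitative step uses Theorem \ref{thm:loglog} and its proof. Because $|A_m|_\infty\le|B_m|_\infty$ (the hypothesis, which gives the normalization $|\tilde A_mp^{f_m-e_m}|_\infty\le|\tilde B_m|_\infty$ used there), all but $\exp(c_1\epsilon^{-2})$ indices $m$ satisfy $\log|B_{m+1}|_p\le(1+\epsilon)\log|B_m|_p$, and every index satisfies this with the constant $c_2$. Over the block of length about $(\lambda_i-1)k_i\ge\lambda_i-1$ of repeated quotients, the relation $\eta_i$ is already fixed after $n_i+k_i$ steps. Because $\alpha$ is very close to the quadratic $\eta_i$, I expect the convergents in this block to behave like those of $\eta_i$. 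In particular, the growth $\log|B_m|_p$ is at least linear in $m$ (Lemma \ref{bakerlemma3_p}), while the periodicity forces $|B_{m+k_i}|_p=|B_m|_p\,|B_{n_i+k_i-1}|_p/|B_{n_i-1}|_p$.

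I would then compare two bounds on $\log\log|B_{N}|_p$ with $N=n_i+\lambda_ik_i-1$. The first is the lower bound $\log\log|B_N|_p\ge \log((\lambda_i-1)k_i\log p)\ge\log\lambda_i+O(1)$. The second is the upper bound from Theorem \ref{thm:loglog}, with $\epsilon$ optimized as there but applied only to the first $n_i+k_i\le(1+C)n_i$ indices, since for the later indices the ratios are controlled by periodicity. That upper bound reads $\log\log|B_N|_p\le c(\alpha)n_i(\log n_i)^{-1/2}+\log\log(\text{periodic growth})$. The estimate needs care here: the naive Theorem \ref{thm:loglog} bound in terms of $N$ is useless, so the real argument must instead count solutions to $|\alpha-A/B|_p<|B|_\infty^{-2-\epsilon}$ among the convergents with $n_i+k_i\le m\le N$. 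Each such convergent has $|\alpha-A_m/B_m|_p=|B_m|_p^{-1}|B_{m+1}|_p^{-1}$, and along the periodic block the approximation to $\eta_i$, hence to $\alpha$, is extremely good. More precisely, $|\alpha-A_m/B_m|_p\le\max(|\alpha-\eta_i|_p,|\eta_i-A_m/B_m|_p)$, and for $m$ near $N$ the quality exponent exceeds $2+\epsilon$ with $\epsilon$ of order $1/(\text{number of periods})$ relative to height. Choosing $\epsilon\asymp(\log n_i)^{-1/2}$ and using $k_i<Cn_i$, the block yields at least about $\log\lambda_i/\log(1+\epsilon)$ distinct solutions. Hypothesis \eqref{eq:lim} makes this exceed $\exp(c_1\epsilon^{-2})$, which contradicts Remark \ref{RemarkbetterBound}.

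The main obstacle is this last counting step: showing that a single long quasi-period produces many distinct good rational approximations with the right exponent, and balancing $\epsilon$ against $n_i$ so that $\exp(c_1\epsilon^{-2})$ is beaten exactly under \eqref{eq:lim}. I would first try to copy Baker's original bookkeeping verbatim, replacing his Davenport–Roth estimate by Theorem \ref{thm:loglog} and the Archimedean heights by $|B_m|_p$ via Proposition \ref{prop:several}(3). Finally, the quadratic $\eta_i$ are distinct from $\alpha$ since $\deg\alpha\ge3$, which ensures the approximations are nontrivial.
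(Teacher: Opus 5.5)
There is a genuine gap, and it sits in the step you yourself flag as the main obstacle. Your set-up agrees with the paper's: the numbers $\eta_i$ of Lemma \ref{lemma:height}, the bound $|\alpha-\eta_i|_p<|B_N|_p^{-2}$ with $N=n_i+\lambda_ik_i-1$, the lower bound $\log\log|B_N|_p\ge\log\lambda_i+O(1)$, and $\eta_i\neq\alpha$.

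\textbf{Why the counting step fails.} You want the closeness of $\alpha$ to $\eta_i$ to produce about $\log\lambda_i/\log(1+\epsilon)$ convergents $A_m/B_m$ in the block with approximation exponent above $2+\epsilon$. There is no mechanism for this.

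For every $m$ one has $|\alpha-A_m/B_m|_p=|B_m|_p^{-2}|b_{m+1}|_p^{-1}$. So the quality of $A_m/B_m$ is governed by the single next quotient, not by $\eta_i$. The bound through $\max(|\alpha-\eta_i|_p,|\eta_i-A_m/B_m|_p)$ adds nothing, because the second term is the same convergent error for $\eta_i$.

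Measured against $|B_m|_p$, the excess exponent is $\log|b_{m+1}|_p/\log|B_m|_p$. After $j$ complete periods, $\log|B_m|_p$ is at least $j$ times the sum of $\log|b|_p$ over one period, so the excess is at most $1/j$. Hence quality $2+\epsilon$ can only occur in the first $\lceil\epsilon^{-1}\rceil$ periods. This is the opposite of your claim that it holds for $m$ near $N$, and the number of such convergents does not grow with $\lambda_i$.

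Likewise, the ``periodic growth'' only gives $\log\log|B_N|_p\le\log\lambda_i+\log\log|B_{n_i+k_i-1}|_p+O(1)$, which is compatible with your lower bound. Proximity to a quadratic number is invisible to a Ridout-type count of rational approximations.

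\textbf{The missing ingredient.} You need a Liouville-type lower bound for the distance from $\alpha$ (of degree $r\ge3$) to the quadratic number $\eta_i$, in terms of the height of $\eta_i$:
\[ |\alpha-\eta_i|_p\ge c(\alpha)\,|B_{n_i+k_i-1}|_p^{-4r}. \]
The paper derives this from Lemma \ref{p lioville}. A cleaner route, with a constant visibly independent of $i$, uses that the resultant of the minimal polynomial of $\alpha$ and $Px^2+Qx+R$ is a nonzero integer.

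Comparing this with $|\alpha-\eta_i|_p<|B_N|_p^{-2}$ and using Lemma \ref{bakerlemma3_p} gives the $\lambda_i$-independent estimate $\lambda_i\le N\ll\log|B_{n_i+k_i-1}|_p$, that is, $\log\lambda_i\le\log\log|B_{n_i+k_i-1}|_p+O(1)$.

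The quantitative Ridout theorem then enters only through Theorem \ref{thm:loglog}, to control the height of $\eta_i$. It is applied at the index $n_i+k_i-1\le(1+C)n_i$, not at $N$. This yields $\log\lambda_i\ll n_i(\log n_i)^{-1/2}$, contradicting \eqref{eq:lim}. No counting of approximations inside the periodic block is needed.
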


\begin{proof}
Suppose $\alpha$ is algebraic of degree $r > 2$, we define
\[ \eta_i := [0, b_1, \ldots, b_{n_i-1},\overline{b_{n_i},\ldots, b_{n_i+k_i-1}}]. \]
Then by construction $\alpha$ and $\eta_i$ agree on the first $n_i + \lambda_ik_i$ partial quotients. 
Consider $\eta_i = (p+\sqrt{d}) / q$, then
\begin{align*}
p= \pm Q, \qquad q=\mp 2 P, \quad \text{and} \quad d=Q^{2}-4 P R,   
\end{align*}
where $P \eta_i^2+Q\eta_i+R=0$. By Lemma \ref{lemma:height}, $|P|_\infty,|Q|_\infty,|R|_\infty$ are less than $2 |B_{n_{i}+k_{i}-1}|_p^{2}$. 
Hence, $|p|_\infty$ and $|q|_\infty$ are both less than $4 |B_{n_{i}+k_{i}-1}|_p^{2}$. 
By Theorem \ref{p lioville} we have
\begin{align*}
\left| \alpha - \frac{p + \sqrt{d}}{q} \right|_p =  
\left| \beta - \frac{p}{q} \right|_p \ge \frac{c(\beta)}{\max(|p|_\infty, |q|_\infty)^{d_\beta}},
\end{align*}
where $\beta := \alpha - \frac{\sqrt{d}}{q} \in \mathbb{Q}_p$ and $d_\beta$ denotes the degree of $\beta$ over $\mathbb{Q}$. 

The constant $c(\beta)$ depends on $\alpha$ and $\sqrt{d}$. But since $\sqrt{d}$ comes from $\eta^{(i)}$ which was defined by the partial quotients of $\alpha$, we actually have $c(\beta) = c(\alpha)$. Moreover, note that $d_\beta \le 2r$, hence we obtain
\begin{align*}
\left| \alpha - \frac{p + \sqrt{d}}{q} \right|_p 
\ge \frac{c(\alpha)}{\max(|p|_\infty, |q|_\infty)^{2r}}.
\end{align*}
Together with the upper bounds for $|p|_\infty$ and $|q|_\infty$, we get
\begin{align*}
\left| \alpha - \eta^{(i)} \right|_p 
\ge 
\frac{c_2}{|B_{n_{i}+k_{i}-1}|_p^{4r}},
\end{align*}
where $c_2 = c(\alpha)/ 4^{2r}$.

However, by Proposition \ref{prop:several},
$$
\left|\alpha-\eta^{(i)}\right|<|B_{m_{i}}|_p^{-2},
$$
where $m_{i}=n_{i}+k_{i} \lambda_{i}-1$. 
Hence,
$$
|B_{m_{i}}|_p< c_{3} |B_{n_{i}+k_{i}-1}|_p^{2 r},
$$
where $c_3 = 1/\sqrt{c_2}$.
Using Lemma \ref{bakerlemma3_p} we obtain
$$
\left(\frac{1}{2}(1+\sqrt{5})\right)^{m_{i}-1}<c_{3} |B_{n_{i}+k_{i}-1}|_p^{2 r}.
$$   
Taking the logarithm we obtain
\begin{align*}
(m_i - 1) \log(\frac{1}{2}(1+\sqrt{5}))< \log(c_{3}) +  2r \log(|B_{n_{i}+k_{i}-1}|_p).
\end{align*}
and
$$ 
m_{i}-1< \frac{\log(c_3)}{\log(\frac{1}{2}(1+\sqrt{5}))} + c_{4} \log |B_{n_{i}+k_{i}-1}|_p,
$$
where $c_4 = 2r / \log(\frac{1}{2}(1+\sqrt{5}))$.
Thus, for all sufficiently large $i$,
$$
\lambda_{i}<m_{i}-1<c_{4} \log |B_{n_{i}+k_{i}-1}|_p,
$$
that is,
$$
\log \lambda_{i}<c_{5} \log \log |B_{n_{i}+k_{i}-1}|_p,
$$
where we choose $c_5$ such that $\log(c_4) + \log \log |B_{n_{i}+k_{i}-1}|_p < c_{5} \log \log |B_{n_{i}+k_{i}-1}|_p$.

\medskip
By Theorem \ref{thm:loglog}, we obtain for $i \gg 0$,
$$
\log \lambda_{i}<c_{6}\left(n_{i}+k_{i}-1\right)\left\{\log \left(n_{i}+k_{i}-1\right)\right\}^{-\frac{1}{2}},
$$
where $c_6$ depends only on $\alpha$.
Using the condition $k_{i}<C n_{i}$ it follows that
$$
\log \lambda_{i}<c_{7} n_{i}\left(\log n_{i}\right)^{-\frac{1}{2}}
$$
for $i \gg 0$, where $c_7 = c_6 \cdot (C+1)$.
This contradicts \eqref{eq:lim} and the theorem is proved.
\end{proof}

\end{document}